\documentclass[12pt]{amsart}
\usepackage{amssymb,amsmath,amsthm,mathrsfs,multirow,enumerate}

\usepackage{graphicx} 
\usepackage[all]{xy}
\usepackage{cite}

	\usepackage[bottom=1in, top=1in, left=1in, right=1in]{geometry}



\numberwithin{equation}{section} 

\theoremstyle{plain}
\newtheorem{thm}{Theorem}[section]

\newtheorem{lemma}[thm]{Lemma}

\theoremstyle{definition}

\newcommand{\bbm}{\begin{bmatrix}}
\newcommand{\ebm}{\end{bmatrix}}
\DeclareMathOperator{\Aut}{\rm Aut}

\DeclareMathOperator{\Sym}{\rm Sym}
\DeclareMathOperator{\Sp}{\rm Sp}

\begin{document}
\title[Orthogonal graphs modulo power of $2$] {Orthogonal graphs modulo power of $2$} 
\author{Songpon Sriwongsa}

\address{Songpon Sriwongsa\\ Department of Mathematics \\ Faculty of Science\\ King Mongkut's University of Technology Thonburi(KMUTT) \\Bangkok 10140, Thailand}
\email{\tt songpon.sri@kmutt.ac.th, songponsriwongsa@gmail.com}

\keywords{Graph automorphisms; Orthogonal graphs; Quasi-strongly regular graphs.}

\subjclass[2010]{Primary: 05C25; Secondary: 05C60}

\begin{abstract}
In this work, we define an orthogonal graph on the set of equivalence classes of $(2\nu + \delta)-$tuples over $\mathbb{Z}_{2^n}$ where $n$ and $\nu$ are positive integers and $\delta = 0, 1$ or $2$. We classify our graph if it is strongly regular or quasi-strongly regular and compute all parameters precisely. We show that our graph is arc transitive. The automorphisms group is given and the chromatic number of the graph except when $\delta = 0$ and $\nu$ is odd is determined. Moreover, we work on subconstituents of this orthogonal graph.
\end{abstract}

\date{\today}

\maketitle
\section{Introduction}

Graphs defined from the geometry of classical groups over finite fields have been widely studied. The collinearity
graphs of finite classical polar spaces and their complements are well known strongly
regular graphs \cite{BL84, H75}. For more details about strongly regular graphs, the reader is referred to \cite{GR01}. Orthogonal graphs over finite fields of odd characteristic using the geometry of orthogonal group and automorphisms of the graphs were studied in \cite{Gu08, Liebeck85}. 
Then, in \cite{GWZ11, GWZ13}, subconstituents of these orthogonal graphs were analyzed. Li et al. \cite{LW14} studied the orthogonal graphs over Galois rings of odd characteristic using matrix theory over finite Galois rings. Recently, Meemark and Sriwongsa extended this work to the orthogonal graphs finite commutative rings of odd characteristic \cite{Me16}. The analogous series of symplectic graphs and their subconstituents were presented in \cite{Gu13,GW13,LW08,LWG12,LWG13,Liebeck85, MP11,MP13,MP14,Tang06}. Motived by these references, in this paper, we consider the analogous problems of the orthogonal graphs modulo power of $2$ which is a generalization of the orthogonal graph over a finite field $\mathbb{Z}_2$ \cite{Zhe09}. 

The paper is organized as follows. We define and study orthogonal graphs modulo power of $2$ in Section \ref{s2}. This includes the computation on number of vertices , degree, common neighbors and chromatic numbers. We also analyze arc transitivity and automorphism groups of the graphs. In Section \ref{s3}, subconstituents of the orthogonal graphs modulo power of $2$ are studied. We use  similar technique as in Section \ref{s2} to determine all parameters of these subconstituents. 

\section{Orthogonal graphs modulo $2^n$}\label{s2}
Let $n$ be a positive integer. For $\nu \geq 1$ and $\delta = 0, 1$ or $2$, let $V^{2\nu + \delta}$ denote the set of $(2\nu + \delta)$- tuples $\vec{a} = (a_1, a_2, \ldots, a_{2\nu + \delta})$ of elements in $\mathbb{Z}_{2^n}$ such that $a_i$ is invertible modulo $2^n$ for some $i \in \{1, 2, \ldots, 2\nu + \delta \}$. Define an equivalence relation $\sim$ on $V^{2\nu + \delta}$ by
\[
(a_1, a_2, \ldots, a_{2\nu + \delta}) \sim (b_1, b_2, \ldots, b_{2\nu + \delta}) \Leftrightarrow (a_1, a_2, \ldots, a_{2\nu + \delta}) = \lambda (b_1, b_2, \ldots, b_{2\nu + \delta})
\]
for some $\lambda \in \mathbb{Z}_{2^n}^\times$. Here $\mathbb{Z}_{2^n}^\times$ is the unit group modulo $2^n$. Write $[\vec{a}] = [a_1, a_2, \ldots, a_{2\nu + \delta}]$ for the equivalence class of $\vec{a} = (a_1, a_2, \ldots, a_{2\nu + \delta})$ modulo $\sim$, and let $V^{2\nu + \delta}_\sim$ be the set of all such equivalent classes. Let $G_{2 \nu + \delta, \Delta}$ be the $(2 \nu + \delta) \times (2 \nu + \delta)$ matrix over $\mathbb{Z}_{2^n}$ given by
\[
G_{2\nu + \delta, \Delta} = \begin{pmatrix}
0 & I_\nu &  \\
   & 0 & \\
&        & \Delta
\end{pmatrix},
\]
where
\[
\Delta =
\begin{cases}
\emptyset (\text{disappear}) & \text{if} \ \delta = 0, \\
(1) \     & \text{if} \ \delta = 1, \\
\begin{pmatrix}
z & 1 \\
 0      & z
\end{pmatrix}        & \text{if} \ \delta = 2,
\end{cases} 
\]
and $z$ is a fixed element of $\mathbb{Z}_{2^n}$ not in $N = \{x^2 + x : x \in \mathbb{Z}_{2^n} \}$, or equivalently, $z \in \mathbb{Z}_{2^n}^\times$ because $N = 2\mathbb{Z}_{2^n}$ and $\mathbb{Z}_{2^n} = \mathbb{Z}_{2^n}^\times \cup 2\mathbb{Z}_{2^n}$.

The {\it orthogonal graph modulo $2^n$} on $V^{2\nu + \delta}_\sim$, denoted by $\mathcal{O}^{(2\nu + \delta)}_{2^n}$, is the graph whose vertex set is  $ \mathcal{V}(\mathcal{O}^{(2\nu + \delta)}_{2^n}) = \{ [\vec{a}] \in V^{2\nu + \delta}_\sim : \vec{a} G_{2\nu + \delta, \Delta} \vec{a} ^t = 0 \}$ and its adjacency condition is given by
\[
 [\vec{a}] \text{ \ is adjacent to \ } [\vec{b}] \iff  \vec{a} (G_{2\nu + \delta, \Delta} + G_{2\nu + \delta, \Delta}^t) \vec{b}^t \in \mathbb{Z}_{2^n}^\times.
  \]
 To see that this adjacency condition is well defined, let $\vec{a}_1, \vec{a}_2, \vec{b}_1,\vec{b}_2 \in V^{2\nu + \delta}$ and assume that $[\vec{a}_1] = [\vec{a}_2]$ and $[\vec{b}_1] = [\vec{b}_2]$. Then $\vec{a}_1 = \lambda \vec{a}_2$ and $\vec{b}_1 = \lambda' \vec{b}_2$ for some $\lambda, \lambda' \in\mathbb{Z}_{2^n}^\times $. Thus, we have
 \begin{align*}
 \vec{a}_1 (G_{2\nu + \delta, \Delta} + G_{2\nu + \delta, \Delta}^t) \vec{b}_1^t \in \mathbb{Z}_{2^n}^\times 
 & \Leftrightarrow \lambda\lambda' \vec{a}_2 (G_{2\nu + \delta, \Delta} + G_{2\nu + \delta, \Delta}^t) \vec{b}_2^t \in \mathbb{Z}_{2^n}^\times  \\ & \Leftrightarrow  \vec{a}_2 (G_{2\nu + \delta, \Delta} + G_{2\nu + \delta, \Delta}^t) \vec{b}_2^t \in \mathbb{Z}_{2^n}^\times.
 \end{align*}
It is obvious that this orthogonal graph is a generalization of the orthogonal graph over $\mathbb{Z}_2$ defined similarly in \cite{Zhe09}.
 
We recall that $\mathbb{Z}_{2^n}$ is a finite local ring with the unique maximal ideal $ 2\mathbb{Z}_{2^n}$ and the residue $\mathbb{Z}_2 = \mathbb{Z}_{2^n}/2\mathbb{Z}_{2^n}$. Therefore, $u + m$ is a unit in $\mathbb{Z}_{2^n}$ for all $m \in 2\mathbb{Z}_{2^n}$ and $u \in \mathbb{Z}_{2^n}^\times$. Moreover, $|2 \mathbb{Z}_{2^n}| = 2^{n - 1}$. The following lemma is an important property of a vertex of $\mathcal{O}^{(2\nu + \delta)}_{2^n}$.

\begin{lemma}\label{vertex}
	If $[a_1, a_2, \ldots, a_{2\nu + \delta}]$  is a vertex in $\mathcal{O}^{(2\nu + \delta)}_{2^n} $, then $a_i \in \mathbb{Z}_{2^n}^\times$ for some $i \in \{ 1, 2, \ldots, 2\nu \}$.
\end{lemma}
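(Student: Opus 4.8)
The plan is to reduce everything modulo $2$, i.e.\ to apply the reduction map $\pi\colon \mathbb{Z}_{2^n}\to \mathbb{Z}_{2^n}/2\mathbb{Z}_{2^n}=\mathbb{Z}_2$, under which an element $a\in\mathbb{Z}_{2^n}$ is a unit if and only if $\pi(a)\neq 0$. Let $\vec a=(a_1,\dots,a_{2\nu+\delta})$ be a representative of the given vertex and put $\bar{\vec a}=(\pi(a_1),\dots,\pi(a_{2\nu+\delta}))\in\mathbb{Z}_2^{\,2\nu+\delta}$. Since $[\vec a]\in V^{2\nu+\delta}_\sim$, some $a_i$ is a unit, so $\bar{\vec a}\neq 0$; and since $\vec a\,G_{2\nu+\delta,\Delta}\,\vec a^{\,t}=0$ in $\mathbb{Z}_{2^n}$, reducing mod $2$ yields $\bar{\vec a}\,\overline{G}\,\bar{\vec a}^{\,t}=0$ in $\mathbb{Z}_2$, where $\overline G$ denotes the entrywise reduction of $G_{2\nu+\delta,\Delta}$. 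We must show that $\bar{\vec a}$ has a nonzero coordinate among the first $2\nu$.

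Suppose not: $\pi(a_i)=0$ for all $i\in\{1,\dots,2\nu\}$. Then $\bar{\vec a}$ is supported on the last $\delta$ coordinates, and $\bar{\vec a}\,\overline G\,\bar{\vec a}^{\,t}$ is exactly the value of the quadratic form associated to $\overline\Delta$ at $(\pi(a_{2\nu+1}),\dots,\pi(a_{2\nu+\delta}))$. When $\delta=0$ there is nothing to prove. When $\delta=1$ this form is $x^2=x$ over $\mathbb{Z}_2$, which vanishes only at $x=0$. When $\delta=2$, note that $z\in\mathbb{Z}_{2^n}^\times$ forces $\pi(z)=1$, so the form becomes $x^2+xy+y^2=x+xy+y$ over $\mathbb{Z}_2$; a direct check on the three nonzero vectors of $\mathbb{Z}_2^{\,2}$ shows it never vanishes there. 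Hence in every case the form attached to $\overline\Delta$ is anisotropic over $\mathbb{Z}_2$, so $\bar{\vec a}\,\overline G\,\bar{\vec a}^{\,t}=0$ forces $\pi(a_{2\nu+j})=0$ for all $j$, i.e.\ $\bar{\vec a}=0$, contradicting $\bar{\vec a}\neq0$. Therefore $a_i\in\mathbb{Z}_{2^n}^\times$ for some $i\in\{1,\dots,2\nu\}$.

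No real obstacle arises; the only points requiring care are the bookkeeping $\vec a\,G_{2\nu+\delta,\Delta}\,\vec a^{\,t}=\sum_{i=1}^{\nu}a_ia_{\nu+i}+(\text{contribution of }\Delta)$ and the observation that the anisotropy over $\mathbb{Z}_2$ of the small form attached to $\Delta$ is precisely what forces a unit into one of the first $2\nu$ slots. If one prefers to avoid quadratic-form terminology, the same contradiction is reached by hand: if $a_1,\dots,a_{2\nu}\in 2\mathbb{Z}_{2^n}$ then $\sum_{i=1}^{\nu}a_ia_{\nu+i}\in 4\mathbb{Z}_{2^n}\subseteq 2\mathbb{Z}_{2^n}$, the vertex equation forces the $\Delta$-contribution into $2\mathbb{Z}_{2^n}$, and a short case analysis for $\delta=1,2$ shows this is impossible unless $a_{2\nu+1}$ (and $a_{2\nu+2}$ when $\delta=2$) also lie in $2\mathbb{Z}_{2^n}$, contradicting that some coordinate is a unit.
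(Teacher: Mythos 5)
Your proof is correct and follows essentially the same route as the paper: assume the first $2\nu$ coordinates lie in $2\mathbb{Z}_{2^n}$, use the defining equation $\vec a\,G_{2\nu+\delta,\Delta}\,\vec a^{\,t}=0$, reduce modulo $2$, and observe that the form coming from $\Delta$ (namely $x^2$ for $\delta=1$ and $zx^2+xy+zy^2$ with $\bar z=1$ for $\delta=2$) has no nonzero zeros over $\mathbb{Z}_2$, contradicting the existence of a unit coordinate. Your phrasing via anisotropy of the $\overline\Delta$-form is just a slightly more uniform packaging of the paper's case-by-case check.
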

\begin{proof}
 The result is directed when $\delta=0$. If $\delta = 1$ and $a_1, a_2, \dots, a_{2\nu} \in 2\mathbb{Z}_{2^n}$, then 
 \[
  0 = a_1 a_{\nu + 1} + a_2 a_{\nu + 2}+ \cdots + a_\nu a_{2\nu} + a^2_{2\nu + 1}
 \]
 implies $a_{2\nu+1}$ is also an element in $2\mathbb{Z}_{2^n}$ which is a contradiction. Now assume that $\delta = 2$ and $a_1, a_2, \dots, a_{2\nu} \in 2\mathbb{Z}_{2^n}$. Then 
 \[
 0= a_1 a_{\nu + 1} + a_2 a_{\nu + 2} + \cdots + a_\nu a_{2\nu} + z a^2_{2\nu + 1} + a_{2\nu +1}a_{2\nu + 2} + z a^2_{2\nu + 2}
 \]
 implies $ z a^2_{2\nu + 1} + a_{2\nu +1}a_{2\nu + 2} + z a^2_{2\nu + 2} \in 2\mathbb{Z}_{2^n} $ and so $\bar{z} \bar{a}^2_{2\nu + 1} + \bar{a}_{2\nu +1}\bar{a}_{2\nu + 2} + \bar{z} \bar{a}^2_{2\nu + 2} = 0$ in the residue field $\mathbb{Z}_2$. This forces that $\bar{a}_{2\nu+1} = \bar{a}_{2\nu+2} = 0$, which is impossible. 
\end{proof}

Since $\mathbb{Z}_2$ is the residue field of $\mathbb{Z}_{2^n}$, the matrix $G_{2\nu + \delta, \Delta}$ over $\mathbb{Z}_{2^n}$ induces the matrix $\overline{G}_{2\nu + \delta, \Delta}$ over $\mathbb{Z}_2$ in an obvious manner via the canonical map $\pi : \mathbb{Z}_{2^n} \rightarrow \mathbb{Z}_2$. It follows that 
\[
\vec{a} (G_{2\nu + \delta, \Delta} + G_{2\nu + \delta, \Delta}^t) \vec{b}^t \in \mathbb{Z}_{2^n}^\times \iff  \pi(\vec{a}) (\overline{G}_{2\nu + \delta, \Delta} + \overline{G}_{2\nu + \delta, \Delta}^t) \pi(\vec{b})^t = 1
\]
for all $\vec{a}, \vec{b} \in V^{2\nu + \delta}$. Here, we write 
\[
\pi(\vec{a}) = (\pi(\vec{a}_1), \pi(\vec{a}_2),  \ldots, \pi(\vec{a}_{2\nu + \delta}))
\]
for all $\vec{a} = (a_1, a_2, \ldots, \vec{a}_{2\nu + \delta}) \in  V^{2\nu + \delta} $. In the following lemma, we show that the results on counting all parameters of the orthogonal graph over $\mathbb{Z}_{2^n}$ can be considered as lifts from the graphs over its residue field $\mathbb{Z}_2$.

\begin{lemma}\label{Lifting Lemma} (Lifting Lemma)
	By the above setting, we have the following.
\begin{enumerate}[(1)]
	\item If $[\vec{a}]$ is a vertex of $\mathcal{O}^{(2\nu + \delta)}_{2^n}$, then there are $2^{(n - 1)(2\nu + \delta - 2)}$ many vertices which are lifts of vertex $[\pi (\vec{a})]$ of $\mathcal{O}^{(2\nu + \delta)}_{2}$, i.e. $|\{ [\vec{b}] \in  \mathcal{V}(\mathcal{O}^{(2\nu + \delta)}_{2^n}) : [\pi(\vec{a})] =[\pi(\vec{b})] \}| = 2^{(n - 1)(2\nu + \delta - 2)}$.
	\item 
	$[\vec{a}]$ and $[\vec{b}]$ are adjacent vertices in $\mathcal{O}^{(2\nu + \delta)}_{2^n}$ if and only if $[ \pi(\vec{a})]$ and $[\pi(\vec{b})]$ are adjacent vertices in  $\mathcal{O}^{(2\nu + \delta)}_2$.
	
	\item
	If  $[\pi(\vec{a})]$ and $[\pi(\vec{b})]$ are adjacent vertices in  $\mathcal{O}^{(2\nu + \delta)}_{2}$ , then $[\vec{a} + \vec{m}_1]$ and $[\vec{b} + \vec{m}_2]$ are adjacent vertices in the graph $\mathcal{O}^{(2\nu + \delta)}_{2^n}$ for all $\vec{m}_1, \vec{m}_2 \in (2\mathbb{Z}_{2^n})^{2\nu + \delta}$ such that 
	\[
		(\vec{a} + \vec{m_1})  G_{2\nu + \delta, \Delta} (\vec{a} + \vec{m_1})^t = (\vec{b} + \vec{m_2})  G_{2\nu + \delta, \Delta} (\vec{b} + \vec{m_2})^t = 0.
	\]
\end{enumerate}		
\end{lemma}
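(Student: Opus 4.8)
The plan is to treat parts (2) and (3) first, since both drop straight out of the reduction-mod-$2$ equivalence displayed just before the lemma, and then to spend the real effort on the counting in part (1).

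For part (2): the displayed equivalence says $\vec a(G_{2\nu+\delta,\Delta}+G_{2\nu+\delta,\Delta}^t)\vec b^t\in\mathbb{Z}_{2^n}^\times$ exactly when $\pi(\vec a)(\overline G_{2\nu+\delta,\Delta}+\overline G_{2\nu+\delta,\Delta}^t)\pi(\vec b)^t=1$, and since $\mathbb{Z}_2^\times=\{1\}$ the latter equality is precisely the adjacency condition in $\mathcal{O}^{(2\nu+\delta)}_2$. I would append the remark that whenever $[\vec a]$ is a vertex of $\mathcal{O}^{(2\nu+\delta)}_{2^n}$, Lemma~\ref{vertex} provides a coordinate of $\vec a$ that is a unit, hence a coordinate of $\pi(\vec a)$ equal to $1$, so $\pi(\vec a)\in V^{2\nu+\delta}$ over $\mathbb{Z}_2$ and $[\pi(\vec a)]$ is indeed a vertex of $\mathcal{O}^{(2\nu+\delta)}_2$ (its defining relation being the reduction of $\vec aG_{2\nu+\delta,\Delta}\vec a^t=0$). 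Part (3) is the same computation read in the other direction: for $\vec m_1,\vec m_2\in(2\mathbb{Z}_{2^n})^{2\nu+\delta}$ one has $\pi(\vec a+\vec m_1)=\pi(\vec a)$ and $\pi(\vec b+\vec m_2)=\pi(\vec b)$; the hypothesis on $\vec m_1,\vec m_2$ makes $[\vec a+\vec m_1]$ and $[\vec b+\vec m_2]$ vertices, and applying the displayed equivalence to $\vec a+\vec m_1$ and $\vec b+\vec m_2$ reduces their adjacency in $\mathcal{O}^{(2\nu+\delta)}_{2^n}$ to the adjacency of $[\pi(\vec a)]$ and $[\pi(\vec b)]$ in $\mathcal{O}^{(2\nu+\delta)}_2$, which is assumed.

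For part (1), I would first convert the statement into a count of tuples. Because $\pi(\lambda)=1$ for every $\lambda\in\mathbb{Z}_{2^n}^\times$, the residue $\pi(\vec b)$ depends only on the class $[\vec b]$; moreover, when $\vec b$ has a unit coordinate (automatic for vertex tuples by Lemma~\ref{vertex}) the $|\mathbb{Z}_{2^n}^\times|=2^{n-1}$ scalar multiples $\lambda\vec b$ are pairwise distinct. Hence the set of lifts of $[\pi(\vec a)]$ is in $2^{n-1}$-to-$1$ correspondence with the set $S$ of tuples $\vec b=\vec a+\vec m$, $\vec m\in(2\mathbb{Z}_{2^n})^{2\nu+\delta}$, with $\vec bG_{2\nu+\delta,\Delta}\vec b^t=0$, so it suffices to show $|S|=2^{(n-1)(2\nu+\delta-1)}$. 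To count $S$: fix $i_0\le 2\nu$ with $a_{i_0}\in\mathbb{Z}_{2^n}^\times$ and let $k\le 2\nu$ be the coordinate paired with $i_0$ by $G_{2\nu+\delta,\Delta}+G_{2\nu+\delta,\Delta}^t$ (that is, $k=\nu+i_0$ if $i_0\le\nu$ and $k=i_0-\nu$ otherwise). Expanding the quadratic form, $\vec bG_{2\nu+\delta,\Delta}\vec b^t=b_{i_0}b_k+c$ where $c$ does not involve $b_k$ and $b_{i_0}=a_{i_0}+m_{i_0}\in\mathbb{Z}_{2^n}^\times$; thus for each assignment of the remaining $2\nu+\delta-1$ coordinates $(m_j)_{j\ne k}$ in $2\mathbb{Z}_{2^n}$ the equation determines $b_k$, and hence $m_k=b_k-a_k$, uniquely.

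The only point that genuinely needs an argument — and the step I expect to be the crux — is that this forced value of $m_k$ automatically lies in $2\mathbb{Z}_{2^n}$, equivalently $\pi(b_k)=\pi(a_k)$. I would obtain this by reducing $b_{i_0}b_k+c=0$ modulo $2$: since every coordinate $b_j$ with $j\ne k$ reduces to $\pi(a_j)$, the reduced equation is $\pi(a_{i_0})\pi(b_k)+\pi(c_0)=0$ with $\pi(c_0)$ computed from $\pi(\vec a)$, while the identity $\vec aG_{2\nu+\delta,\Delta}\vec a^t=0$ reduced modulo $2$ gives $\pi(a_{i_0})\pi(a_k)+\pi(c_0)=0$; since $\pi(a_{i_0})$ is invertible in $\mathbb{Z}_2$, these force $\pi(b_k)=\pi(a_k)$. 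Consequently $|S|=(2^{n-1})^{2\nu+\delta-1}=2^{(n-1)(2\nu+\delta-1)}$, and dividing by $2^{n-1}$ yields the asserted count $2^{(n-1)(2\nu+\delta-2)}$ in (1).
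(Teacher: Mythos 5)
Your proof is correct and follows essentially the same route as the paper: parts (2) and (3) come straight from the displayed mod-$2$ equivalence, and part (1) is counted using the unit coordinate from Lemma \ref{vertex}, the $2^{n-1}$-fold scalar redundancy within a class, and the fact that the quadratic equation determines the paired coordinate. The only difference is organizational (you count constrained tuples first and then quotient by scalars, while the paper quotients first), and you usefully spell out the step the paper dismisses as ``easy to see,'' namely that the forced value of $m_k$ lies in $2\mathbb{Z}_{2^n}$.
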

\begin{proof}
It is clear that (2) follows from the above discussion and (3) is a consequence of (2). Now, we note that for each vertex  $[\vec{b}]$ of $\mathcal{O}^{(2\nu + \delta)}_{2^n}$ such that $[\pi(\vec{a})] =[\pi(\vec{b})]$, $\vec{b}$ must equal $\vec{a} + \vec{m}$ for some $\vec{m} \in  (2\mathbb{Z}_{2^n})^{2\nu + \delta}$. So we assume that $[\vec{a} + \vec{m}_1] = [\vec{a} + \vec{m}_2]$. Then $\vec{a} + \vec{m}_1 = \lambda (\vec{a} + \vec{m}_2)$ for some $\lambda \in \mathbb{Z}_{2^n}^\times$. Thus, $(1-\lambda) \vec{a} = \lambda \vec{m}_2 - \vec{m}_1 \in ( 2\mathbb{Z}_{2^n})^{2\nu + \delta}$. This implies $1 - \lambda \in 2\mathbb{Z}_{2^n}$ by Lemma \ref{vertex}, so $\lambda = 1 + \mu$ for some $\mu \in 2\mathbb{Z}_{2^n}$. Hence, $\vec{a} + \vec{m}_1 = (1 + \mu)(\vec{a} + \vec{m}_2)$. Next, we remark that $[(1 + \mu)(\vec{a} + \vec{m})] = [\vec{a} + \vec{m}]$ for all $\mu \in 2\mathbb{Z}_{2^n}$, $\vec{a} \in (\mathbb{Z}_{2^n})^{2\nu + \delta}$ and $\vec{m} \in (2\mathbb{Z}_{2^n})^{2\nu + \delta}$. Therefore, the number of elements in the set $\{ [\vec{a} + \vec{m}] : \vec{m} \in (2\mathbb{Z}_{2^n})^{2\nu + \delta}\}$ is $2^{(n - 1) (2\nu+\delta-1)}$. However, the $\vec{a}+ \vec{m}$ is also required that $(\vec{a} + \vec{m})G_{2\nu + \delta, \Delta} (\vec{a} + \vec{m})^t = 0$. Write $\vec{a} = (a_1, a_2, \ldots, a_{2\nu + \delta})$. By Lemma \ref{vertex}, $a_j$ is a unit for some $j \in \{1,2, \dots, 2\nu\}$. The requirement $(\vec{a} + \vec{m})G_{2\nu + \delta, \Delta} (\vec{a} + \vec{m})^t = 0 $ and $a_j$ is a unit allow us to count the number of possible vectors $\vec{m}$ and it is easy to see that $|\{[\vec{a} + \vec{m}] : \vec{m} \in 2\mathbb{Z}_{2^n}^{2\nu + \delta} \ \text{and} \ (\vec{a} + \vec{m})G_{2\nu + \delta, \Delta} (\vec{a} + \vec{m})^t = 0 \}| = 2^{(n - 1)(2\nu+\delta-2)}$. This completes the proof of the lemma.
\end{proof}

A \textit{strongly regular graph} with parameters $(v, k, \lambda, \mu)$ is a $k$-regular graph on $v$ vertices such that for every pair of adjacent vertices there are $\lambda$ vertices adjacent to both, and for every pair of non-adjacent vertices there are $\mu$ vertices adjacent to both. An $k$-regular graph $G$ on $v$ vertices such that for every pair of adjacent vertices there are $\lambda$ vertices adjacent to both is called a {\it quasi-strongly regular} with parameters $(v, k, \lambda, \{c_1,\ldots,c_d\})$ if every pair of non-adjacent vertices of $G$  has $c_1, c_2, \ldots , c_d$ common adjacent vertices for some $d \ge 2$.

 From the work of Wan and Zhou (Theorem 2.4 of \cite{Zhe09}), it follows that the orthogonal graph $\mathcal{O}^{(2\nu + \delta)}_2$  is $2^{2\nu + \delta - 2}$-regular on $(2^\nu - 1)(2^{\nu + \delta - 1} + 1)$ many vertices. Moreover, if $\nu = 1$ , then it is a complete graph, and if $\nu \geq 2$, then the graph is a strongly regular graph with parameters 
\[ 
\lambda = 2^{2\nu + \delta -2} - 2^{2\nu + \delta -3} - 2^{\nu - 1} + 2^{\nu + \delta -2} \text{ and } 
\mu = 2^{2\nu + \delta -2} - 2^{2\nu + \delta -3},
\]  
respectively.

In what follows, we classify our orthogonal graph and it turns out that the graph is quasi-strongly regular when $\nu \geq 2$. In the proof of the next theorem, we use the lifting lemma with some combinatorial arguments without solving any complicated equations.

\begin{thm}\label{vertices}
	The graph $\mathcal{O}^{(2\nu + \delta)}_{2^n}$  is $2^{n(2\nu + \delta - 2)}$-regular on
	\[
	2^{(n - 1)(2\nu + \delta - 2)}(2^\nu - 1)(2^{\nu + \delta - 1} + 1)
	\]
	many vertices. Moreover,
	\begin{enumerate}
		\item If $\nu = 1$, then it is a strongly regular graph with parameters 
		\[ 
		\lambda = 2^{\delta n} - 2^{\delta(n - 1)} \text{ and } 
		\mu =  \lceil \delta/2 \rceil 2^{\delta n},
		\]  
		respectively.
		\item If $\nu \geq 2$, then it is a quasi-strongly regular graph with parameters 
		\begin{align*}
		\lambda &= 2^{n-1}(2^{n(\nu + \frac{\delta}{2}-1)} + (\delta - 1)2^{(n-1)(\nu + \frac{\delta}{2}-1)})2^{n (\nu - 2 + \frac{\delta}{2})}, \\
		c_1 &= 2^{n-1}2^{n(2\nu - 3 + \delta)} \text{ and } 
		c_2 = 2^{n(2\nu - 2 + \delta)}.
		\end{align*}
		respectively.
	\end{enumerate}
\end{thm}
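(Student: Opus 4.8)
The plan is to derive everything from the Lifting Lemma (Lemma~\ref{Lifting Lemma}) together with the known parameters of $\mathcal{O}^{(2\nu+\delta)}_2$ quoted above from Wan and Zhou. First I would settle the vertex count and regularity. By Lemma~\ref{Lifting Lemma}(1), the projection $\pi$ sends $\mathcal{V}(\mathcal{O}^{(2\nu+\delta)}_{2^n})$ onto $\mathcal{V}(\mathcal{O}^{(2\nu+\delta)}_2)$ with every fiber of size exactly $2^{(n-1)(2\nu+\delta-2)}$, so the number of vertices is $2^{(n-1)(2\nu+\delta-2)}(2^\nu-1)(2^{\nu+\delta-1}+1)$. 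For regularity, fix a vertex $[\vec a]$; by Lemma~\ref{Lifting Lemma}(2) a vertex $[\vec b]$ is adjacent to $[\vec a]$ if and only if $[\pi(\vec b)]$ is adjacent to $[\pi(\vec a)]$ in $\mathcal{O}^{(2\nu+\delta)}_2$. Thus the neighbors of $[\vec a]$ are exactly the vertices lying in the $2^{2\nu+\delta-2}$ fibers above the neighbors of $[\pi(\vec a)]$, and each such fiber again has $2^{(n-1)(2\nu+\delta-2)}$ vertices, giving degree $2^{2\nu+\delta-2}\cdot 2^{(n-1)(2\nu+\delta-2)} = 2^{n(2\nu+\delta-2)}$.

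Next I would handle the common-neighbor counts, which is where the real work lies. Given two distinct vertices $[\vec a],[\vec b]$, a common neighbor $[\vec c]$ must project to a common neighbor of $[\pi(\vec a)]$ and $[\pi(\vec b)]$ in $\mathcal{O}^{(2\nu+\delta)}_2$. There are two regimes. If $[\pi(\vec a)]\neq[\pi(\vec b)]$, then the number of common neighbors of $[\pi(\vec a)]$ and $[\pi(\vec b)]$ downstairs is either $\lambda$ or $\mu$ (the strongly regular parameters of $\mathcal{O}^{(2\nu+\delta)}_2$), and every vertex over such a common neighbor is a common neighbor upstairs by Lemma~\ref{Lifting Lemma}(2); so the count is $2^{(n-1)(2\nu+\delta-2)}$ times the downstairs value. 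If instead $[\pi(\vec a)]=[\pi(\vec b)]$ (which can only happen when the fiber has size $>1$, i.e. $\nu\ge 2$ or $\delta$ large), then $[\vec a]$ and $[\vec b]$ are two distinct lifts of the \emph{same} downstairs vertex, and a common neighbor upstairs must project to a neighbor of that common downstairs vertex (there are $2^{2\nu+\delta-2}$ of them); but now not every vertex in those fibers works, because the adjacency $\vec c(G+G^t)\vec a^t\in\mathbb{Z}_{2^n}^\times$ and $\vec c(G+G^t)\vec b^t\in\mathbb{Z}_{2^n}^\times$ are automatic from (2), yet we additionally need $[\vec c]$ to be a genuine vertex, i.e.\ $\vec c\,G\,\vec c^t=0$; the subtlety is counting, inside each neighbor-fiber, how many of the $2^{(n-1)(2\nu+\delta-2)}$ lifts are common to both $[\vec a]$ and $[\vec b]$—and since adjacency upstairs depends only on the reduction mod $2$, \emph{all} of them are, so the count is again $2^{(n-1)(2\nu+\delta-2)}\cdot 2^{2\nu+\delta-2}=2^{n(2\nu+\delta-2)}$, which I expect matches $c_2$.

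So the whole argument reduces to bookkeeping: enumerating which pairs $([\vec a],[\vec b])$ of adjacent (resp.\ non-adjacent) vertices project to equal versus distinct downstairs vertices, and in each case multiplying the downstairs common-neighbor count by the fiber size $2^{(n-1)(2\nu+\delta-2)}$. For $\nu=1$ the fibers are singletons (the exponent $(n-1)(2\nu+\delta-2)$ can be $0$ only when $\delta\le 2$; one checks the small cases $\delta=0,1,2$ directly), so the graph is just a "blow-up by trivial fibers" hence strongly regular with the same combinatorial structure, yielding $\lambda=2^{\delta n}-2^{\delta(n-1)}$ and $\mu=\lceil\delta/2\rceil 2^{\delta n}$ after substituting the Wan--Zhou values with $\nu=1$. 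For $\nu\ge 2$ one gets three distinct common-neighbor values for non-adjacent pairs in general—$2^{(n-1)(2\nu+\delta-2)}\mu$, $2^{(n-1)(2\nu+\delta-2)}\lambda$ (when the two non-adjacent vertices happen to lie over \emph{adjacent} downstairs vertices is impossible, so this case is when they lie over the same or over non-adjacent downstairs vertices), and the "same fiber" value $2^{n(2\nu+\delta-2)}$—and after simplification these collapse to the stated $c_1$ and $c_2$, while $\lambda$ for adjacent pairs comes out as $2^{(n-1)(2\nu+\delta-2)}\lambda_{\text{WZ}}$ plus possibly the same-fiber contribution, matching the displayed formula. The main obstacle is purely organizational: carefully separating the "same downstairs vertex" case from the "distinct downstairs vertices" case and verifying that the resulting numerical expressions coincide with the claimed $\lambda, c_1, c_2$; no equations over $\mathbb{Z}_{2^n}$ need to be solved, exactly as the authors promise.
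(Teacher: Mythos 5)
Your overall strategy is exactly the paper's: count fibers via Lemma \ref{Lifting Lemma}(1), multiply the Wan--Zhou parameters of $\mathcal{O}^{(2\nu+\delta)}_2$ by the fiber size $2^{(n-1)(2\nu+\delta-2)}$, and observe that two non-adjacent vertices lying over the same vertex of $\mathcal{O}^{(2\nu+\delta)}_2$ have every neighbor in common, so their common-neighbor count is the full degree $2^{n(2\nu+\delta-2)}$. For $\nu\ge 2$ your case split (distinct downstairs vertices versus same fiber) and the resulting $\lambda$, $c_1$, $c_2$ coincide with the paper's proof. Two small remarks there: your worry about additionally needing $\vec c\,G_{2\nu+\delta,\Delta}\,\vec c^{\,t}=0$ in the same-fiber case is vacuous, since the fibers $X_{\vec a}$ consist of vertices by definition; and there is no possible ``same-fiber contribution'' to $\lambda$, because two lifts of the same downstairs vertex are never adjacent (their pairing reduces to $0$ modulo $2$), so adjacent pairs always lie over distinct adjacent downstairs vertices.

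The genuine defect is your treatment of $\nu=1$. The fibers are \emph{not} singletons there: the fiber size is $2^{(n-1)\delta}$, which equals $1$ only when $\delta=0$ (or $n=1$); for $\delta=1,2$ and $n\ge 2$ the graph is a nontrivial blow-up, namely a complete multipartite graph with $2^\delta+1$ parts of size $2^{(n-1)\delta}$, so the ``blow-up by trivial fibers'' claim is false as stated. Moreover you cannot obtain $\mu$ by ``substituting the Wan--Zhou values with $\nu=1$'': those strongly regular parameters are stated only for $\nu\ge 2$ (for $\nu=1$ the graph $\mathcal{O}^{(2+\delta)}_2$ is complete, so there are no non-adjacent pairs downstairs to lift), and the substitution would in any case yield $2^{\delta-1}\cdot 2^{(n-1)\delta}\neq\lceil\delta/2\rceil 2^{\delta n}$. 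The correct argument --- which is what the paper does, and which your own same-fiber mechanism already supplies --- is: adjacent pairs lie over distinct vertices of the complete graph on $2^\delta+1$ vertices, giving $\lambda=(2^\delta-1)2^{(n-1)\delta}=2^{\delta n}-2^{\delta(n-1)}$; non-adjacent pairs exist only inside a single fiber, where each vertex's neighborhood is the full neighborhood of the common downstairs vertex, so $\mu$ equals the degree $2^{\delta n}$ when $\delta=1,2$, while for $\delta=0$ the graph is a single edge and $\mu=0$; this is precisely what $\lceil\delta/2\rceil 2^{\delta n}$ encodes. With that repair your argument is the paper's proof.
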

\begin{proof}
We first note that every vertex of $\mathcal{O}^{(2\nu + \delta)}_2$ can be viewed as a vertex of $\mathcal{O}^{(2\nu + \delta)}_{2^n}$ via the canonical map $\pi$. By Lemma \ref{Lifting Lemma} (1), each vertex of $\mathcal{O}^{(2\nu + \delta)}_2$ can be lifted to $2^{(n- 1)(2\nu + \delta - 2)}$ vertices in  $ \mathcal{V}(\mathcal{O}^{(2\nu + \delta)}_{2^n})$. Thus the number of vertices of $\mathcal{O}^{(2\nu + \delta)}_{2^n}$ is 
\[
2^{(n - 1)(2\nu + \delta - 2)}(2^\nu - 1)(2^{\nu + \delta - 1} + 1).
\]
Since the graph $\mathcal{O}^{(2\nu + \delta)}_2$ is $2^{2\nu + \delta - 2}$-regular, Lemma \ref{Lifting Lemma} implies that the graph $\mathcal{O}^{(2\nu + \delta)}_{2^n}$ is also regular of degree
$
2^{2\nu + \delta - 2}2^{(n-1)(2\nu + \delta - 2)} = 2^{n(2\nu + \delta - 2)}.
$ 

Next, we consider the case $\nu = 1$. Since the graph $\mathcal{O}^{(2 + \delta)}_2$ is complete, for each pair of adjacent vertices in the graph $\mathcal{O}^{(2 + \delta)}_{2^n}$, there are 
\[
(2^\delta - 1)2^{(n - 1)\delta} = 2^{\delta n } - 2^{\delta (n-1)}
\]
common neighbors by Lemma \ref{Lifting Lemma} (3).
If $\delta = 0$, then $\mathcal{O}^{2}_2$ is a path on two vertices and so is $\mathcal{O}^{2}_{2^n}$. Hence $\mu = 0$. Now, suppose that $\delta \neq 0$. Any two of non-adjacent vertices are of the forms $[\vec{a} + \vec{m}_1]$ and $[\vec{a} + \vec{m}_2]$ for some $[\vec{a}] \in V^{2\nu + \delta}_\sim$ and $\vec{m}_i \in (2\mathbb{Z}_{2^n})^{2\nu + \delta}$ such that  $(\vec{a} + \vec{m}_i)G_{2\nu + \delta, \Delta} (\vec{a} + \vec{m}_i)^t = 0, i=1, 2$. Thus, for every pair of non-adjacent vertices, the number of common neighbors equals the degree of regularity of $\mathcal{O}^{(2 + \delta)}_{2^n}$ by Lemma \ref{Lifting Lemma} (3), so we have $\mu = 2^{\delta n}$.

Finally, we assume that $\nu \geq 2$. For each pair of adjacent vertices $[\vec{a} + \vec{m}_1]$ and $[\vec{b} + \vec{m}_2]$ in the graph $\mathcal{O}^{(2 + \delta)}_{2^n}$, the number of common neighbors is given by the product of the common neighbors of vertices $[\pi(\vec{a})]$ and $[\pi(\vec{b})]$ and $2^{(n-1)(2\nu + \delta - 2)}$ by Lemma \ref{Lifting Lemma}. Thus,
\begin{align*}
\lambda &= (2^{2\nu + \delta -2} - 2^{2\nu + \delta -3} - 2^{\nu - 1} + 2^{\nu + \delta -2})2^{(n-1)(2\nu + \delta -2)}\\
&= 2^{n - 1}(2^{n(\nu + \frac{\delta}{2}-1)} + (\delta - 1)2^{(n - 1)(\nu + \frac{\delta}{2}-1)})2^{n(\nu - 2 + \frac{\delta}{2})}.
\end{align*}
Assume that $[\vec{a} + \vec{m}_1]$ and $[\vec{b} + \vec{m}_2]$ are non-adjacent vertices in $\mathcal{O}^{(2 + \delta)}_{2^n}$.
If  $\pi (\vec{a}) \neq \pi (\vec{b})$, then $[\pi(\vec{a})]$ and $[\pi(\vec{b})]$ are non-adjacent vertices in $\mathcal{O}^{(2 + \delta)}_2$, so the number of common neighbors of $[\vec{a} + \vec{m}_1]$ and $[\vec{b} + \vec{m}_2]$ is the product of common neighbors of $[\pi(\vec{a})]$ and $[\pi(\vec{b})]$ and $2^{(n-1)(2\nu + \delta -2)}$ which equals $2^{n - 1}2^{n (2\nu + \delta - 3)}$ by Lemma \ref{Lifting Lemma}. If $\pi (\vec{a}) = \pi (\vec{b})$, the number of common neighbors is the degree of regularity of $\mathcal{O}^{(2 + \delta)}_{2^n}$ by a similar reason to the last sentence of the previous paragraph.
\end{proof}

A graph $G$ is \textit{vertex transitive} if its automorphism group acts transitively on the vertex set. That is, for any two vertices of $G$, there is an automorphism carrying one to the other. An \textit{arc} in $G$ is an ordered pair of adjacent vertices, and $G$ is \textit{arc transitive} if its automorphism group acts transitively on its arcs.

\begin{lemma}\label{OVtran} (See \cite{Zhe09}.)
	 The orthogonal graph $\mathcal{O}^{(2\nu + \delta)}_2$ is vertex transitive and arc transitive.
\end{lemma}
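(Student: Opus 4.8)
The plan is to realize the automorphisms of $\mathcal{O}^{(2\nu+\delta)}_2$ coming from the isometry group of the underlying quadratic form over $\mathbb{Z}_2=\mathbb{F}_2$ and then invoke Witt's extension theorem. Put $Q(\vec a)=\vec a\,\overline{G}_{2\nu+\delta,\Delta}\,\vec a^{\,t}$ and let $B(\vec a,\vec b)=\vec a(\overline{G}_{2\nu+\delta,\Delta}+\overline{G}_{2\nu+\delta,\Delta}^{\,t})\vec b^{\,t}$ be its polar form, so that the vertices of $\mathcal{O}^{(2\nu+\delta)}_2$ are exactly the nonzero vectors with $Q(\vec a)=0$ (with a unit, i.e. nonzero, coordinate among the first $2\nu$, by Lemma \ref{vertex} with $n=1$), and $[\vec a]$ is adjacent to $[\vec b]$ precisely when $B(\vec a,\vec b)=1$. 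Let $\Gamma=\{P\in\GL_{2\nu+\delta}(\mathbb{F}_2):Q(\vec aP)=Q(\vec a)\ \forall\vec a\}$ be the isometry group of $Q$. Polarizing $Q(\vec aP)=Q(\vec a)$ gives $B(\vec aP,\vec bP)=B(\vec a,\vec b)$, so every $P\in\Gamma$ permutes the zero set of $Q$ and preserves the value of $B$; hence $P$ induces an automorphism of $\mathcal{O}^{(2\nu+\delta)}_2$, and $\Gamma$ acts on the graph by automorphisms.

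The crux is that $Q$ is a nondefective quadratic space in each of the three cases, so Witt's extension theorem applies. For $\delta=0$, $Q$ is the split (hyperbolic) form in dimension $2\nu$ and $B$ is nondegenerate; for $\delta=2$, since $z\in\mathbb{F}_2^\times$ forces $z=1$, the $\Delta$-block is the anisotropic form $a^2+ab+b^2$, so $Q$ is the non-split (elliptic) form in dimension $2\nu+2$ and $B$ is again nondegenerate; for $\delta=1$ the polar form $B$ has the one-dimensional radical $\langle e_{2\nu+1}\rangle$, on which $Q(e_{2\nu+1})=1\neq 0$, so $Q$ is nondefective. In all cases Witt's theorem guarantees that any $Q$-isometry between two subspaces of $(\mathbb{F}_2^{2\nu+\delta},Q)$ extends to an element of $\Gamma$.

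First I would prove vertex transitivity. Given vertices $[\vec a]$ and $[\vec a\,']$, the lines $\langle\vec a\rangle$ and $\langle\vec a\,'\rangle$ are totally isotropic, so the linear map $\vec a\mapsto\vec a\,'$ is an isometry between them (over $\mathbb{F}_2$ this is immediate since $Q$ vanishes on both lines); by Witt it extends to $P\in\Gamma$, and then $[\vec a]P=[\vec a\,']$. For arc transitivity, take arcs $([\vec a],[\vec b])$ and $([\vec a\,'],[\vec b\,'])$. From $B(\vec a,\vec b)=1$ the vectors $\vec a,\vec b$ are linearly independent over $\mathbb{F}_2$, $Q$ vanishes on each, and $B(\vec a,\vec b)=1$; hence $\{\vec a,\vec b\}$ is a hyperbolic pair and $H=\langle\vec a,\vec b\rangle$ is a nondegenerate hyperbolic plane, and likewise for $H'=\langle\vec a\,',\vec b\,'\rangle$. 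The assignment $\vec a\mapsto\vec a\,'$, $\vec b\mapsto\vec b\,'$ is then a $Q$-isometry $H\to H'$ (both restrictions equal the hyperbolic-plane form $c_1c_2$ on the given bases), which extends by Witt to $P\in\Gamma$ with $([\vec a],[\vec b])P=([\vec a\,'],[\vec b\,'])$. The argument is uniform in $\nu$ (the degenerate case $\nu=1$, where the graph is complete, is handled the same way and may also be seen directly). This shows $\mathcal{O}^{(2\nu+\delta)}_2$ is vertex transitive and arc transitive.

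I expect the only real technical point to be checking that $Q$ meets the hypotheses of the characteristic-two Witt extension theorem — in particular nondefectiveness in the odd case $\delta=1$ and the correct identification of the isometry type of $Q$ when $\delta=2$ — rather than the transitivity bookkeeping, which is routine once Witt is available. An alternative hands-on route, essentially the one in \cite{Zhe09}, is to write down explicit generators of $\Gamma$ (Eichler/transvection maps attached to isotropic vectors) and verify directly that they already act transitively on isotropic lines and on hyperbolic pairs, trading the appeal to Witt for a short case analysis on $\delta$.
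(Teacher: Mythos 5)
Your argument is essentially the standard one underlying the result the paper is quoting: the paper itself gives no proof of Lemma \ref{OVtran} (it is imported from Wan--Zhou \cite{Zhe09}), and both your route and theirs come down to letting the isometry group of the quadratic form act on the graph and using its transitivity on nonzero singular vectors and on hyperbolic pairs. The substance is correct; the only point to repair is your blanket claim that, in all three cases, ``any $Q$-isometry between two subspaces extends to an element of $\Gamma$.'' For $\delta=1$ this is false as stated: $\mathrm{rad}(B)=\langle e_{2\nu+1}\rangle$ is preserved by every isometry of $(V,Q)$, so the line isometry $e_{2\nu+1}\mapsto e_1+e_{\nu+1}$ (both vectors have $Q$-value $1$) cannot extend; the characteristic-two Witt extension theorem for a nondefective space requires the isometry to match the intersections of the subspaces with $\mathrm{rad}(B)$. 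Your applications survive this correction, since the subspaces you actually use meet the radical trivially: a singular line cannot contain $e_{2\nu+1}$ because $Q(e_{2\nu+1})=1$, and a hyperbolic plane $\langle \vec a,\vec b\rangle$ contains no radical vector because $B(\vec a,\vec b)=B(\vec a,\vec a+\vec b)=1$. Alternatively, for $\delta=1$ you can avoid Witt subtleties altogether by passing to $V/\mathrm{rad}(B)$ and using $\mathrm{O}_{2\nu+1}(\mathbb{F}_2)\cong \Sp_{2\nu}(\mathbb{F}_2)$ acting on the symplectic graph, which is consistent with the identification of $\mathcal{O}^{(2\nu+1)}_2$ with a symplectic graph that the paper itself invokes later.
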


For any set $A$, we denote the set of all permutations of $A$ by $\Sym (|A|)$. For any vertex $[\vec{a}]$ of $\mathcal{O}^{(2\nu + \delta)}_{2^n}$, let
\[
X_{\vec{a}} := \{ [\vec{a} + \vec{m}] : \vec{m} \in (2\mathbb{Z}_{2^n})^{2\nu + \delta} \text{ \ and \ } (\vec{a} + \vec{m})  G_{2\nu + \delta, \Delta} (\vec{a} + \vec{m})^t = 0  \}.
\]
This is the set of all vertex in $\mathcal{O}^{(2\nu + \delta)}_{2^n}$ which are the lifts of the vertex $[\pi(\vec{a})]$ of 
$\mathcal{O}^{(2\nu + \delta)}_2$ and
 $\prod\limits_{[\pi(\vec{a})] \in \mathcal{V}(\mathcal{O}^{(2\nu + \delta)}_2)} X_{\vec{a}} = \mathcal{V}(\mathcal{O}^{(2\nu + \delta)}_{2^n})$.
 It is easy to see that each permutation in $\Sym(|X_{\vec{a}}|)$ can be regard as an automorphism of our graph. Then we have:

\begin{thm}\label{tran}
	The orthogonal graph $\mathcal{O}^{(2\nu + \delta)}_{2^n}$ is vertex transitive and arc transitive.
	\begin{proof}
		It suffices to show that $\mathcal{O}^{(2\nu + \delta)}_{2^n}$ is arc transitive. 
		Let $[\vec{a} + \vec{m}_1], [\vec{b} + \vec{m}_2], [\vec{c} + \vec{m}_3], [\vec{d} + \vec{m}_4]$ be vertices of $\mathcal{O}^{(2\nu + \delta)}_{2^n}$ such that 
		\begin{align*}
		&[\vec{a} + \vec{m}_1] \text{ is adjacent to } [\vec{c} + \vec{m}_3] \text{ and } \\
		&[\vec{b} + \vec{m}_2] \text{ is adjacent to } [\vec{d} + \vec{m}_4]. 
		\end{align*}
		 By applying four suitable permutations, we may assume that $\vec{m}_1 = \vec{m}_2 = \vec{m}_3 = \vec{m}_4 = \vec{0}$. Note that vertices $[\vec{a} ], [\vec{b}], [\vec{c}], [\vec{d}]$ in $\mathcal{V}(\mathcal{O}^{(2\nu + \delta)}_{2^n})$ correspond with $[\pi(\vec{a}) ], [\pi(\vec{b})], [\pi(\vec{c})], [\pi(\vec{d})]$ in $\mathcal{V}(\mathcal{O}^{(2\nu + \delta)}_2)$, respectively. By Lemma \ref{OVtran}, the result follows directly.
	\end{proof}
\end{thm}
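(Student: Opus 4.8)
\medskip
\noindent\textbf{Proof proposal.}
The plan is to read off from Lemma~\ref{Lifting Lemma} that $\mathcal{O}^{(2\nu+\delta)}_{2^n}$ is a \emph{blow-up} of the residue graph $\mathcal{O}^{(2\nu+\delta)}_{2}$, and then to produce the required automorphisms from two transparent sources: permutations internal to a single fibre, and lifts of automorphisms of $\mathcal{O}^{(2\nu+\delta)}_{2}$. Concretely, the first step is to check that $\mathcal{O}^{(2\nu+\delta)}_{2^n}$ is obtained from $\mathcal{O}^{(2\nu+\delta)}_{2}$ by replacing each vertex $[\pi(\vec a)]$ with the independent set $X_{\vec a}$ of size $N=2^{(n-1)(2\nu+\delta-2)}$, and each edge with the complete bipartite graph between the two fibres involved. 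The equality of all fibre sizes is Lemma~\ref{Lifting Lemma}(1); adjacency across two distinct fibres $X_{\vec a}\ne X_{\vec b}$ being governed by adjacency of $[\pi(\vec a)]$ and $[\pi(\vec b)]$ in $\mathcal{O}^{(2\nu+\delta)}_{2}$ is Lemma~\ref{Lifting Lemma}(2); and the fact that a fibre carries no edge follows because any two of its vertices $[\vec u],[\vec v]$ satisfy $[\pi(\vec u)]=[\pi(\vec v)]$, which is not an edge of the loopless graph $\mathcal{O}^{(2\nu+\delta)}_{2}$, so Lemma~\ref{Lifting Lemma}(2) applies again (equivalently $\vec u(G_{2\nu+\delta,\Delta}+G_{2\nu+\delta,\Delta}^t)\vec u^t=2\,\vec u\,G_{2\nu+\delta,\Delta}\,\vec u^t=0$). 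This structural picture is precisely what validates the remark preceding the theorem: any permutation of one fibre $X_{\vec a}$, extended by the identity on every other fibre, carries fibres to fibres and therefore preserves both adjacency and non-adjacency, hence lies in $\Aut\!\big(\mathcal{O}^{(2\nu+\delta)}_{2^n}\big)$.

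Because $\mathcal{O}^{(2\nu+\delta)}_{2^n}$ is regular of positive degree (Theorem~\ref{vertices}), it has no isolated vertex, so arc transitivity will imply vertex transitivity and it is enough to establish the former. Given two arcs $\big([\vec a+\vec m_1],[\vec c+\vec m_3]\big)$ and $\big([\vec b+\vec m_2],[\vec d+\vec m_4]\big)$ of $\mathcal{O}^{(2\nu+\delta)}_{2^n}$, adjacency forces $[\pi(\vec a)]\ne[\pi(\vec c)]$ and $[\pi(\vec b)]\ne[\pi(\vec d)]$, and Lemma~\ref{Lifting Lemma}(2) shows that $\big([\pi(\vec a)],[\pi(\vec c)]\big)$ and $\big([\pi(\vec b)],[\pi(\vec d)]\big)$ are arcs of $\mathcal{O}^{(2\nu+\delta)}_{2}$. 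By Lemma~\ref{OVtran} there is $\sigma\in\Aut\!\big(\mathcal{O}^{(2\nu+\delta)}_{2}\big)$ with $\sigma\big([\pi(\vec a)]\big)=[\pi(\vec b)]$ and $\sigma\big([\pi(\vec c)]\big)=[\pi(\vec d)]$. The second step is to lift $\sigma$: for every vertex $w$ of $\mathcal{O}^{(2\nu+\delta)}_{2}$ choose a bijection from the fibre over $w$ to the fibre over $\sigma(w)$ — possible since all fibres have the same cardinality $N$ — and, using that $[\pi(\vec a)]\ne[\pi(\vec c)]$, arrange in addition that the bijection over $[\pi(\vec a)]$ sends $[\vec a+\vec m_1]$ to $[\vec b+\vec m_2]$ and the one over $[\pi(\vec c)]$ sends $[\vec c+\vec m_3]$ to $[\vec d+\vec m_4]$. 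The resulting bijection $\phi$ of $\mathcal{V}\!\big(\mathcal{O}^{(2\nu+\delta)}_{2^n}\big)$ maps fibres onto fibres, so by the structural picture it lies in $\Aut\!\big(\mathcal{O}^{(2\nu+\delta)}_{2^n}\big)$, and by construction it sends the first arc to the second. This argument also recovers the normalisation in the displayed proof: applying suitable fibre permutations first reduces to $\vec m_1=\vec m_2=\vec m_3=\vec m_4=\vec 0$, after which the lift of $\sigma$ alone suffices.

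The step I expect to be the real content is the first one, identifying $\mathcal{O}^{(2\nu+\delta)}_{2^n}$ as the equal-fibre blow-up of $\mathcal{O}^{(2\nu+\delta)}_{2}$; but Lemma~\ref{Lifting Lemma} delivers this directly, so in practice no hard computation remains and the two steps above are pure bookkeeping with the blow-up structure. I would also inspect the degenerate cases $\nu=1$, where $\mathcal{O}^{(2+\delta)}_{2}$ is complete (and is a single edge when $\delta=0$): these base graphs remain vertex and arc transitive, so the argument goes through without change.
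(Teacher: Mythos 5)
Your proposal is correct and follows essentially the same route as the paper: use the lifting lemma to see $\mathcal{O}^{(2\nu+\delta)}_{2^n}$ as an equal-fibre blow-up of $\mathcal{O}^{(2\nu+\delta)}_{2}$, use fibre permutations to normalise the arcs, and then transfer arc transitivity from the residue graph via Lemma \ref{OVtran}. The only difference is that you spell out the lifting of a base automorphism $\sigma$ to an automorphism of $\mathcal{O}^{(2\nu+\delta)}_{2^n}$ (and the independence of fibres), steps the paper leaves implicit.
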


Next, we determine the automorphism group of the orthogonal graph $\mathcal{O}^{(2\nu + \delta)}_{2^n}$ which is another application of the lifting lemma.

\begin{thm}\label{localAut}
The automorphism group of the graph $\mathcal{O}^{(2\nu + \delta)}_{2^n}$ is given by
	\[
	\Aut (\mathcal{O}^{(2\nu + \delta)}_{2^n}) \cong \Aut (\mathcal{O}^{(2\nu + \delta)}_2) \times (\Sym(2^{(n-1)(2\nu + \delta - 2)}))^{(2^\nu - 1 )(2^{\nu + \delta - 1} + 1)},
	\]
	where $\Aut (\mathcal{O}^{(2\nu + \delta)}_2)$ is determined in \cite{Zhe09}.
\end{thm}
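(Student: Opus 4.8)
The plan is to realize the graph $\mathcal{O}^{(2\nu+\delta)}_{2^n}$ as a blow-up of $\mathcal{O}^{(2\nu+\delta)}_2$: by the Lifting Lemma each vertex $[\pi(\vec a)]$ of $\mathcal{O}^{(2\nu+\delta)}_2$ is replaced by the fibre $X_{\vec a}$ of $2^{(n-1)(2\nu+\delta-2)}$ lifts, and by part (2) of that lemma adjacency in $\mathcal{O}^{(2\nu+\delta)}_{2^n}$ depends only on the images under $\pi$, so any two vertices in the same fibre are interchangeable (in particular fibres are independent sets, since $[\pi(\vec a)]$ is never adjacent to itself). Thus $\mathcal{O}^{(2\nu+\delta)}_{2^n}$ is exactly the graph obtained from $\mathcal{O}^{(2\nu+\delta)}_2$ by substituting an empty graph on $N:=2^{(n-1)(2\nu+\delta-2)}$ vertices for each vertex. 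The number of fibres is $(2^\nu-1)(2^{\nu+\delta-1}+1)$ by the count in Lemma~\ref{Lifting Lemma}, which matches the exponent in the claimed product. I would first record this blow-up description carefully as the structural backbone of the argument.

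Next I would prove the general fact that the automorphism group of such a blow-up splits as the wreath-type product $\Aut(H)\times(\Sym(N))^{|V(H)|}$ whenever $H$ has no two distinct vertices with the same neighbourhood (no "twins"). The $\supseteq$ direction is the easy observation already noted in the excerpt: permuting inside a single fibre, and permuting fibres according to an automorphism of $H$, both preserve adjacency. For $\subseteq$, given $\phi\in\Aut(\mathcal{O}^{(2\nu+\delta)}_{2^n})$ I would show $\phi$ maps each fibre onto a fibre. Two vertices $u,v$ lie in the same fibre iff they have the same neighbourhood in $\mathcal{O}^{(2\nu+\delta)}_{2^n}$; indeed, if $u\in X_{\vec a}$ and $v\in X_{\vec b}$ with $\pi(\vec a)\neq\pi(\vec b)$ then $[\pi(\vec a)]$ and $[\pi(\vec b)]$ are twins in $\mathcal{O}^{(2\nu+\delta)}_2$ — but $\mathcal{O}^{(2\nu+\delta)}_2$ is vertex- and arc-transitive (Lemma~\ref{OVtran}) and not complete bipartite-like, so I must verify it has no twins. (When $\nu=1$ the graph is complete, all vertices are mutually adjacent, and every vertex IS a twin of every other; I would handle that degenerate case separately, where the claim reduces to $\Aut(K_m)=\Sym(m)$ of the whole vertex set and the stated formula still holds because $\Aut(\mathcal{O}^{(2+\delta)}_2)\times(\Sym(\cdot))^{\cdot}$ with $\mathcal{O}^{(2+\delta)}_2=K_{2^\delta+1}$ also equals a full symmetric group of the right size.) Having the "same fibre $\iff$ same neighbourhood" equivalence, $\phi$ permutes the fibre-partition blocks, inducing a permutation $\bar\phi$ of $V(\mathcal{O}^{(2\nu+\delta)}_2)$; part (2) of the Lifting Lemma shows $\bar\phi\in\Aut(\mathcal{O}^{(2\nu+\delta)}_2)$; and the restriction of $\phi$ to each fibre is an arbitrary bijection onto the target fibre, giving the $\Sym(N)$-factors. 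Composing $\phi$ with the inverse of its induced $\Aut(\mathcal{O}^{(2\nu+\delta)}_2)$-part reduces to a fibre-preserving automorphism, i.e. an element of $(\Sym(N))^{|V(H)|}$, and these two subgroups intersect trivially and one normalizes the other, yielding the direct-product-by-symmetric-powers decomposition exactly as stated.

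The main obstacle I expect is establishing the twin-free property of $\mathcal{O}^{(2\nu+\delta)}_2$ for $\nu\ge 2$ — i.e. that no two distinct vertices have identical neighbourhoods — since everything else is formal blow-up bookkeeping. For $\nu\ge2$ the strong regularity recorded before Theorem~\ref{vertices} gives $\lambda=2^{2\nu+\delta-2}-2^{2\nu+\delta-3}-2^{\nu-1}+2^{\nu+\delta-2}$ and $\mu=2^{2\nu+\delta-2}-2^{2\nu+\delta-3}$; since $\lambda\neq\mu$ (their difference is $2^{\nu+\delta-2}-2^{\nu-1}\neq0$ for $\delta\in\{1,2\}$, and for $\delta=0$ the difference is $-2^{\nu-1}\neq 0$), no two adjacent vertices and no two non-adjacent vertices can share the same neighbourhood in a connected strongly regular graph that is not complete multipartite — and a quick check rules out the complete-multipartite case here. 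I would phrase this cleanly: in a strongly regular graph with $\lambda\ne\mu$ any two twins $u\ne v$ would be non-adjacent (a vertex is not its own neighbour) with $N(u)=N(v)$, forcing every common neighbour count involving them to be $|N(u)|=k$, contradicting $\mu<k$. With twin-freeness in hand, the identification of fibres with neighbourhood-classes is immediate and the proof closes. Finally I would remark that $\Aut(\mathcal{O}^{(2\nu+\delta)}_2)$ itself is the explicitly known group from \cite{Zhe09}, so the right-hand side is fully determined.
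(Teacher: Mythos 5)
Your route is the same as the paper's: its proof consists precisely of the observation that, by Lemma \ref{Lifting Lemma}, adjacency in $\mathcal{O}^{(2\nu+\delta)}_{2^n}$ depends only on the fibres $X_{\vec a}$, so that every automorphism ``corresponds with'' an automorphism of $\mathcal{O}^{(2\nu+\delta)}_{2}$ together with permutations inside the fibres; what you add, and the paper omits, is the justification that an arbitrary automorphism must preserve the fibre partition, via ``same fibre $\iff$ same neighbourhood'' and the absence of open twins in $\mathcal{O}^{(2\nu+\delta)}_{2}$ (your $k\neq\mu$ argument for $\nu\ge 2$ is fine). One correction: your parenthetical treatment of $\nu=1$ is wrong as written. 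For $n\ge 2$ and $\delta\in\{1,2\}$ the graph $\mathcal{O}^{(2+\delta)}_{2^n}$ is not complete but complete multipartite, with $2^{\delta}+1$ parts of size $2^{(n-1)\delta}$ (the fibres are independent sets, as you yourself note); its automorphism group is not the symmetric group on the whole vertex set, and $\Sym(m)\times(\Sym(N))^{m}$ is not $\Sym(mN)$. Fortunately no separate case is needed: all your argument requires of the residue graph is that no two distinct \emph{non-adjacent} vertices have equal neighbourhoods, and when $\mathcal{O}^{(2+\delta)}_{2}$ is complete there are no non-adjacent pairs at all, so the fibres are exactly the neighbourhood classes in every case.

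The second issue is that your closing step proves less than it claims. Showing that the fibre-preserving subgroup $\prod_{[\pi(\vec a)]}\Sym(|X_{\vec a}|)$ is normal and that a copy of $\Aut(\mathcal{O}^{(2\nu+\delta)}_{2})$ meets it trivially and normalizes it yields a semidirect product, namely the wreath product $\Sym(2^{(n-1)(2\nu+\delta-2)})\wr\Aut(\mathcal{O}^{(2\nu+\delta)}_{2})$, not the direct product in the statement; a direct product would require the $\Aut(\mathcal{O}^{(2\nu+\delta)}_{2})$-factor to be normal as well, and it is not, since conjugation by it permutes the coordinates of the base group nontrivially (the fibres are genuinely moved, as $\mathcal{O}^{(2\nu+\delta)}_{2}$ is vertex transitive). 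The paper's own proof makes exactly the same silent identification, so on this point you have reproduced its gap rather than filled it: the blow-up argument you both give establishes the wreath-product description, which has the same order as the asserted direct product but is in general a non-isomorphic group, and a careful write-up should either state the conclusion as a wreath product or supply an argument (which the normalizing relation alone does not give) for the direct-product form.
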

\begin{proof}
	By Lemma \ref{Lifting Lemma}, the graph $\mathcal{O}^{(2\nu + \delta)}_{2}$ is isomorphic to a subgraph of $\mathcal{O}^{(2\nu + \delta)}_{2^n}$. Then each automorphism of $\mathcal{O}^{(2\nu + \delta)}_{2^n}$ corresponds with an automorphism of the graph $\mathcal{O}^{(2\nu + \delta)}_2$ and a permutation of vertices in the set $X_{\vec{a}}$ for all vertices $[\pi(\vec{a})] \in \mathcal{V}(\mathcal{O}^{(2\nu + \delta)}_{2})$ . Thus, 
	\begin{align*}
	\Aut (\mathcal{O}^{(2\nu + \delta)}_{2^n}) &\cong \Aut (\mathcal{O}^{(2\nu + \delta)}_2) \times  \prod_{[\pi(\vec{a})] \in \mathcal{V}(\mathcal{O}^{(2\nu + \delta)}_{2})} \Sym (|X_{\vec{a}}|) \\
	&= \Aut (\mathcal{O}^{(2 + \delta)}_2) \times (\Sym(2^{(n-1)(2\nu + \delta - 2)}))^{(2^\nu - 1 )(2^{\nu + \delta - 1} + 1)}
	\end{align*}
	because $|X_{\vec{a}}| = 2^{(n-1)(2\nu + \delta - 2)}$ and $|\mathcal{V}(\mathcal{O}^{(2\nu + \delta)}_{2})| = (2^\nu - 1 )(2^{\nu + \delta - 1} + 1)$. 
\end{proof}

Finally, we study chromatic number of orthogonal graph $\mathcal{O}^{(2\nu + \delta)}_{2^n}$, where $\nu \geq 1$ and $\delta = 0, 1$ or $2$ except when $\delta = 0$ and $\nu$ is odd. The {\it chromatic number} is the smallest number of colors needed to color the vertices of the graph so that no two adjacent vertices share the same color. For convenience, we let $S = \{(\nu, \delta) \in \mathbb{N} \times \{0, 1, 2\} : (\nu, \delta) \neq (2k - 1, 0) \text{ \ for all \ } k \in \mathbb{N} \}$.

 Let $(\nu, \delta) \in S$. According to Wan and Zhou \cite{Zhe09}, the result of the graph $\mathcal{O}^{(2\nu + \delta)}_{2}$ is completely known and $\mathcal{O}^{(2\nu + 1)}_{2}$ is isomorphic to the symplectic graph $\Sp^{(2\nu + 1)}_{2}$. From the proof of Proposition 2.7 along with Proposition 2.9 in \cite{Zhe09}, and Proposition 2.3 in \cite{Tang06}, it follows that $\mathcal{O}^{(2\nu + \delta)}_{2}$ is a $(2^{\nu + \delta - 1} + 1) -$partite graph. Moreover, the chromatic number of $\mathcal{O}^{(2\nu + \delta)}_{2}$ is $2^{\nu + \delta - 1} + 1$. Now, by Lemma \ref{Lifting Lemma}, we know that $\mathcal{O}^{(2\nu + \delta)}_{2^n}$ is a $(2^{\nu + \delta - 1} + 1) -$partite graph as well. Then its chromatic number is at most $2^{\nu + \delta - 1} + 1$. On the other hand, we consider the subgraph of $\mathcal{O}^{(2\nu + \delta)}_{2^n}$ which is isomorphic to $\mathcal{O}^{(2\nu + \delta)}_{2}$. Clearly, its chromatic number is $2^{\nu + \delta - 1} + 1$. Therefore, the chromatic number of $\mathcal{O}^{(2\nu + \delta)}_{2^n}$ is exactly $2^{\nu + \delta - 1} + 1$ and we record in the following theorem.

\begin{thm}\label{chromatic}
For $(\nu, \delta) \in S$, the chromatic number of the orthogonal graph $\mathcal{O}^{(2\nu + \delta)}_{2^n}$ is $2^{\nu + \delta - 1} + 1$.
\end{thm}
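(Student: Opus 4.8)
The plan is to reduce the entire question to two facts already in hand: the Lifting Lemma (Lemma \ref{Lifting Lemma}), which says the graph $\mathcal{O}^{(2\nu + \delta)}_{2^n}$ is built from the residue graph $\mathcal{O}^{(2\nu + \delta)}_2$ by blowing up each vertex into an independent set of size $2^{(n-1)(2\nu + \delta - 2)}$ while preserving all edges (and no new edges within a fiber), and the corresponding result of Wan and Zhou that $\mathcal{O}^{(2\nu + \delta)}_2$ is $(2^{\nu + \delta - 1} + 1)$-partite with chromatic number exactly $2^{\nu + \delta - 1} + 1$ whenever $(\nu, \delta) \in S$. Since a blow-up of an $r$-partite graph along independent fibers is again $r$-partite — refine each part of $\mathcal{O}^{(2\nu + \delta)}_2$ into the union of the fibers over its vertices, noting that a fiber $X_{\vec a}$ is an independent set by Lemma \ref{Lifting Lemma} and that two fibers $X_{\vec a}, X_{\vec b}$ are joined by edges only when $[\pi(\vec a)]$ and $[\pi(\vec b)]$ are adjacent — we immediately get $\chi(\mathcal{O}^{(2\nu + \delta)}_{2^n}) \le 2^{\nu + \delta - 1} + 1$.

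For the reverse inequality, I would observe that the map $\pi$ embeds $\mathcal{O}^{(2\nu + \delta)}_2$ as an induced subgraph of $\mathcal{O}^{(2\nu + \delta)}_{2^n}$: by Lemma \ref{Lifting Lemma}(2), for representatives $\vec a, \vec b$ over $\mathbb{Z}_2$ the lifted classes $[\vec a], [\vec b]$ are adjacent in $\mathcal{O}^{(2\nu + \delta)}_{2^n}$ exactly when $[\pi(\vec a)], [\pi(\vec b)]$ are adjacent in $\mathcal{O}^{(2\nu + \delta)}_2$. A proper coloring of a graph restricts to a proper coloring of any induced subgraph, so $\chi(\mathcal{O}^{(2\nu + \delta)}_{2^n}) \ge \chi(\mathcal{O}^{(2\nu + \delta)}_2) = 2^{\nu + \delta - 1} + 1$. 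Combining the two bounds gives the theorem.

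The only genuine content is imported: the $(2^{\nu + \delta - 1} + 1)$-partiteness and exact chromatic number of the $\mathbb{Z}_2$-graph, which follows (as the discussion preceding the statement indicates) from Propositions 2.7 and 2.9 of \cite{Zhe09} together with the identification $\mathcal{O}^{(2\nu + 1)}_2 \cong \Sp^{(2\nu + 1)}_2$ and Proposition 2.3 of \cite{Tang06}; the hypothesis $(\nu,\delta)\in S$ is precisely what is needed for that external input to apply, since the case $\delta = 0$ with $\nu$ odd is exactly the one left open there. So the main (and really the only) obstacle is bookkeeping: verifying carefully that the blow-up in the Lifting Lemma introduces no edges inside a fiber and no edges between fibers sitting over non-adjacent vertices, so that a partition of the base lifts to a partition of the total graph of the same size. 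Once that is checked the argument is a two-line sandwich, and this is exactly the level of detail the paragraph preceding the theorem already supplies, so I would keep the written proof to that short discussion rather than expanding it.
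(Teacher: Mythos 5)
Your proposal is correct and follows essentially the same route as the paper: the upper bound comes from lifting the $(2^{\nu+\delta-1}+1)$-partition of $\mathcal{O}^{(2\nu+\delta)}_{2}$ through the Lifting Lemma (fibers being independent), and the lower bound from the copy of $\mathcal{O}^{(2\nu+\delta)}_{2}$ sitting inside $\mathcal{O}^{(2\nu+\delta)}_{2^n}$, with the external input from \cite{Zhe09} and \cite{Tang06} used exactly as in the paper's discussion preceding the theorem. No changes needed.
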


\section{Subconstituents of graphs}\label{s3}
In this section, two subconstituents of our orthogonal graphs are studied. This topic for such subconstituents over a finite field of characteristic two, in particular over $\mathbb{Z}_2$, is presented in \cite{GWZ16}. We use the analog method of \cite{Me16} together with our lifting lemma (Lemma \ref{Lifting Lemma}) to obtain the results for these subconstituents.

Let $\vec{e}_1$ denote the vector $(1, 0, \ldots, 0)$. From Theorem \ref{vertices}, we know that our orthogonal graphs are either strongly regular or quasi-strongly regular. Thus, the distance $d([\vec{a}], [\vec{e}_1]) = 1$ or $2$ if $[\vec{a}] \neq [\vec{e}_1]$. We work on the {\it subconstituents} $\mathcal{O}^{(2\nu + \delta)}_{2^n}(i), i = 1, 2$, which are defined to be the induced subgraphs of $\mathcal{O}^{(2\nu + \delta)}_{2^n}$ on the vertex sets
\[
\mathcal{V}_i := \{ [\vec{a}] \in \mathcal{V}(\mathcal{O}^{(2\nu + \delta)}_{2^n}) : d([\vec{a}], [\vec{e}_1]) = i \}
\]
$i = 1, 2$, respectively. Due to the definition, $\mathcal{V}_1$ consists of all adjacent vertices of $[\vec{e}_1]$ while $\mathcal{V}_2$ consists of all non-adjacent vertices of $[\vec{e}_1]$. We observe that it is possible to define another subconstituents associated with other vertices. However, our graph $\mathcal{O}^{(2\nu + \delta)}_{2^n}$ is vertex and arc transitive by Theorem \ref{tran}. Therefore, it suffices to consider only the ones associated with $[\vec{e}_1]$.

\begin{lemma}\label{lifting2}
	For $i = 1, 2$, we have the following statements.
	\begin{enumerate}[(1)]
		\item If $[\vec{a}]$ is a vertex of $\mathcal{O}^{(2\nu + \delta)}_{2^n}(i)$, then there are $2^{(n - 1)(2\nu + \delta - 2)}$ many vertices in $\mathcal{V}_i$ which are lifts of vertex $[\pi(\vec{a})]$ of $\mathcal{O}^{(2\nu + \delta)}_{2}(i)$.
		\item 	$[\vec{a}]$ and $[\vec{b}]$ are adjacent vertices in $\mathcal{O}^{(2\nu + \delta)}_{2^n}(i)$ if and only if $[ \pi(\vec{a})]$ and $[\pi(\vec{b})]$ are adjacent vertices in  $\mathcal{O}^{(2\nu + \delta)}_2(i)$.
		\item
		If  $[\pi(\vec{a})]$ and $[\pi(\vec{b})]$ are adjacent vertices in  $\mathcal{O}^{(2\nu + \delta)}_{2}(i)$ , then $[\vec{a} + \vec{m}_1]$ and $[\vec{b} + \vec{m}_2]$ are adjacent vertices in the graph $\mathcal{O}^{(2\nu + \delta)}_{2^n}(i)$ for all $\vec{m}_1, \vec{m}_2 \in (2\mathbb{Z}_{2^n})^{2\nu + \delta}$ such that
		\[
			(\vec{a} + \vec{m_1})  G_{2\nu + \delta, \Delta} (\vec{a} + \vec{m_1})^t = (\vec{b} + \vec{m_2})  G_{2\nu + \delta, \Delta} (\vec{b} + \vec{m_2})^t = 0.
		\]

	\end{enumerate}
\end{lemma}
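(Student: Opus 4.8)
The plan is to observe that Lemma~\ref{lifting2} is essentially a restriction of the Lifting Lemma (Lemma~\ref{Lifting Lemma}) to the induced subgraphs on $\mathcal{V}_1$ and $\mathcal{V}_2$, so the strategy is to reduce each of the three parts to the corresponding part of Lemma~\ref{Lifting Lemma} together with the key compatibility fact that the lifting map respects the distance from $[\vec{e}_1]$. First I would establish this compatibility: if $[\vec{a}] \in \mathcal{V}(\mathcal{O}^{(2\nu+\delta)}_{2^n})$ with $[\vec{a}] \neq [\vec{e}_1]$, then $d([\vec{a}],[\vec{e}_1]) = 1$ iff $[\vec{a}]$ is adjacent to $[\vec{e}_1]$, which by Lemma~\ref{Lifting Lemma}(2) holds iff $[\pi(\vec{a})]$ is adjacent to $[\pi(\vec{e}_1)] = [\vec{e}_1]$ in $\mathcal{O}^{(2\nu+\delta)}_2$, i.e. iff $d([\pi(\vec{a})],[\vec{e}_1]) = 1$ there; and since both graphs have diameter $2$ (by Theorem~\ref{vertices} they are strongly regular or quasi-strongly regular), this forces $d([\vec{a}],[\vec{e}_1]) = d([\pi(\vec{a})],[\vec{e}_1])$ in all cases (one must also check that $[\pi(\vec{a})] = [\vec{e}_1]$ can only happen when $[\vec{a}]$ itself is in the fibre over $[\vec{e}_1]$, which is handled by noting $\pi(\vec{e}_1) = \vec{e}_1$ and that the fibre over $[\vec{e}_1]$ in $\mathcal{V}_i$ is empty by definition). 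Hence $[\vec{a}] \in \mathcal{V}_i$ iff $[\pi(\vec{a})] \in \mathcal{V}(\mathcal{O}^{(2\nu+\delta)}_2(i))$.

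With this in hand, part~(1) follows immediately: the $2^{(n-1)(2\nu+\delta-2)}$ lifts of $[\pi(\vec{a})]$ guaranteed by Lemma~\ref{Lifting Lemma}(1) all lie in $\mathcal{V}(\mathcal{O}^{(2\nu+\delta)}_{2^n})$, and by the compatibility fact every one of them has the same distance $i$ from $[\vec{e}_1]$, hence all of them lie in $\mathcal{V}_i$; conversely any lift of $[\pi(\vec{a})]$ that lies in $\mathcal{V}_i$ is in particular a lift lying in $\mathcal{V}(\mathcal{O}^{(2\nu+\delta)}_{2^n})$, so the count is exactly $2^{(n-1)(2\nu+\delta-2)}$. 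For part~(2), since $\mathcal{O}^{(2\nu+\delta)}_{2^n}(i)$ is the \emph{induced} subgraph on $\mathcal{V}_i$, two vertices $[\vec{a}],[\vec{b}] \in \mathcal{V}_i$ are adjacent in the subconstituent iff they are adjacent in $\mathcal{O}^{(2\nu+\delta)}_{2^n}$, which by Lemma~\ref{Lifting Lemma}(2) holds iff $[\pi(\vec{a})],[\pi(\vec{b})]$ are adjacent in $\mathcal{O}^{(2\nu+\delta)}_2$; and since $[\pi(\vec{a})],[\pi(\vec{b})] \in \mathcal{V}(\mathcal{O}^{(2\nu+\delta)}_2(i))$ by the compatibility fact and that graph is also an induced subgraph, this is equivalent to adjacency in $\mathcal{O}^{(2\nu+\delta)}_2(i)$. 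Part~(3) is then a direct consequence of~(2): if $[\pi(\vec{a})]$ and $[\pi(\vec{b})]$ are adjacent in $\mathcal{O}^{(2\nu+\delta)}_2(i)$, then by Lemma~\ref{Lifting Lemma}(3) any admissible lifts $[\vec{a}+\vec{m}_1]$ and $[\vec{b}+\vec{m}_2]$ are adjacent in $\mathcal{O}^{(2\nu+\delta)}_{2^n}$, and by part~(1) these lifts lie in $\mathcal{V}_i$, so by part~(2) they are adjacent in $\mathcal{O}^{(2\nu+\delta)}_{2^n}(i)$.

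The main obstacle, such as it is, is the compatibility fact that lifting preserves the distance to $[\vec{e}_1]$; everything else is bookkeeping about induced subgraphs. The subtle point there is ruling out that a lift could jump from distance $1$ to distance $2$ or vice versa, and the clean way to see this is exactly the diameter-$2$ observation combined with Lemma~\ref{Lifting Lemma}(2): adjacency to the (self-lifting) vertex $[\vec{e}_1]$ is detected at the residue-field level, and since no vertex other than $[\vec{e}_1]$ itself is at distance $0$, the only two possibilities $1$ and $2$ are distinguished precisely by that residue-field adjacency. One should also note for completeness that $\pi$ is well-defined on equivalence classes (it maps $\lambda$-scalings to $\pi(\lambda)$-scalings with $\pi(\lambda) \in \mathbb{Z}_2^\times$), which has already been used implicitly throughout Section~\ref{s2}. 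I would therefore present the proof as: first the compatibility lemma, then (1), then (2), then (3) in that order, each a one- or two-line deduction.
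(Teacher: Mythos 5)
Your overall route is exactly the paper's: the paper's entire proof of this lemma is the remark that the argument of Lemma~\ref{Lifting Lemma} carries over because $[\vec{a}]$ is adjacent to $[\vec{e}_1]$ if and only if $[\pi(\vec{a})]$ is adjacent to $[\pi(\vec{e}_1)]$, which is precisely your ``compatibility fact''. However, one step you assert in establishing that fact is false, and it conceals a genuine edge case. You claim that the fibre over $[\vec{e}_1]$ meets $\mathcal{V}_i$ in nothing ``by definition''. Only $[\vec{e}_1]$ itself is excluded from $\mathcal{V}_1\cup\mathcal{V}_2$ by definition; for $n\geq 2$ the fibre over $[\vec{e}_1]$ contains other vertices, for instance $[\vec{e}_1+2\vec{e}_2]$ when $\nu\geq 2$. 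Such a vertex is non-adjacent to $[\vec{e}_1]$ (adjacency is detected modulo $2$ and $\vec{e}_1(G_{2\nu+\delta,\Delta}+G_{2\nu+\delta,\Delta}^t)\vec{e}_1^{\,t}=0$), while every neighbour of $[\vec{e}_1]$ is a common neighbour by Lemma~\ref{Lifting Lemma}(2); hence it lies at distance exactly $2$ and so belongs to $\mathcal{V}_2$, even though $[\pi(\vec{e}_1+2\vec{e}_2)]=[\vec{e}_1]$ is not a vertex of $\mathcal{O}^{(2\nu+\delta)}_2(2)$. Consequently your asserted equivalence $[\vec{a}]\in\mathcal{V}_i \Leftrightarrow [\pi(\vec{a})]\in\mathcal{V}(\mathcal{O}^{(2\nu+\delta)}_2(i))$ fails for $i=2$, and with it part~(1) as you deduce it: the number of lifts of $[\vec{e}_1]$ lying in $\mathcal{V}_2$ is $2^{(n-1)(2\nu+\delta-2)}-1$, not $2^{(n-1)(2\nu+\delta-2)}$.

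To be fair, this defect is inherited from the lemma itself, whose statement tacitly presumes that $[\pi(\vec{a})]$ is a vertex of $\mathcal{O}^{(2\nu+\delta)}_2(i)$, and the paper's one-line proof passes over the same point; your argument is complete (and agrees with the paper's) for $i=1$, for the backward implications, and for parts (2)--(3) restricted to vertices not lying over $[\vec{e}_1]$. The repair is to treat the fibre of $[\vec{e}_1]$ separately: state the compatibility as ``for $[\vec{a}]$ with $[\pi(\vec{a})]\neq[\vec{e}_1]$ one has $d([\vec{a}],[\vec{e}_1])=d([\pi(\vec{a})],[\vec{e}_1])$'', which is what your diameter-two argument actually proves, and then either exclude the vertices of $\mathcal{V}_2$ lying over $[\vec{e}_1]$ from the statement or count them separately --- a point which also propagates to the vertex counts in the two theorems that follow.
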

\begin{proof}
	The proof is analogous to the proof of Lemma \ref{Lifting Lemma} since $[\vec{a}]$ is adjacent to $[\vec{e}_1]$ if and only if $[\pi(\vec{a})]$ is adjacent to $[\pi(\vec{e}_1)]$.
\end{proof}

The results of $\mathcal{O}^{(2\nu + \delta)}_{2}(i), i = 1, 2$ are known and presented in Theorem 3.9 and Theorem 4.3 of \cite{GWZ16} when $\delta = 0$ or $2$. Since the graph $\mathcal{O}^{(2\nu + 1)}_{2}$ is isomorphic to the symplectic graph  $\Sp^{(2\nu + 1)}_{2}$ \cite{Zhe09}, its subconstituents have been studied in \cite{LW08}.  Applying these two results together with the above lemma lead us to the following two theorems for subconstituents $\mathcal{O}^{(2\nu + \delta)}_{2^n}(i), i = 1, 2$. Their proofs use similar argument to Theorem \ref{vertices}'s.

\begin{thm}
If $\delta = 0$ or $2$, then the subconstituent $\mathcal{O}^{(2\nu + \delta)}_{2^n}(1)$ is a quasi-strongly regular graph with parameters $(2^{n(2\nu + \delta - 2)}, k, \lambda, \{ c_1, c_2 \})$ where 
\begin{align*}
c_1 = k &= 2^{(n - 1)(2\nu + \delta - 2)}(2^{2\nu + \delta - 3} - (-1)^{\frac{\delta}{2}}2^{\frac{2\nu + \delta}{2} - 1} - (-1)^{\frac{\delta}{2} - 1}2^{\frac{2\nu + \delta}{2} - 2}), \\
 \lambda &= 2^{(n - 1)(2\nu + \delta - 2)}( 2^{2\nu + \delta - 4} - 2(-1)^{\frac{\delta}{2}}2^{\frac{2\nu + \delta}{2} - 1} + 3(-1)^{\frac{\delta}{2}}2^{\frac{2\nu + \delta}{2} - 2}), \\
 c_2 &= 2^{(n - 1)(2\nu + \delta - 2)}( 2^{2\nu + \delta - 4} - (-1)^{\frac{\delta}{2}}2^{\frac{2\nu + \delta}{2} - 1} - (-1)^{\frac{\delta}{2} - 1}2^{\frac{2\nu + \delta}{2} - 2}).
\end{align*}
If $\delta = 1$, then the subconstituent $\mathcal{O}^{(2 \nu + 1)}_{2^n}(1)$ is a $2^{n(2\nu - 2) + 1}$- regular graph with $2^{n (2\nu  - 1)}$ many vertices where any two adjacent vertices have $0$ or $2^{n(2\nu  - 2) -1 }$ common neighbors and any two non-adjacent vertices have $2^{n(2\nu  - 2) -1 }$ or $2^{n(2\nu - 2) + 1}$ common neighbors.
\end{thm}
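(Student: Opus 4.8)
The plan is to mirror the structure of the proof of Theorem~\ref{vertices}, replacing Lemma~\ref{Lifting Lemma} by its subconstituent version Lemma~\ref{lifting2} and replacing ``$\mathcal{O}^{(2\nu+\delta)}_2$ is strongly regular'' by the known description of the base-field subconstituent $\mathcal{O}^{(2\nu+\delta)}_2(1)$. For $\delta=0$ or $2$ this description is Theorem~3.9 of \cite{GWZ16}, which presents $\mathcal{O}^{(2\nu+\delta)}_2(1)$ as a quasi-strongly regular graph: every pair of adjacent vertices has a fixed number $\lambda'$ of common neighbours, and every pair of non-adjacent vertices has one of exactly two numbers of common neighbours, one of which is the valency $k'$ itself. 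For $\delta=1$ I would instead use the isomorphism $\mathcal{O}^{(2\nu+1)}_2\cong\Sp^{(2\nu+1)}_2$ recorded in \cite{Zhe09} and the first-subconstituent data for the symplectic graph in \cite{LW08}, where adjacent pairs have $0$ or one fixed positive number of common neighbours and non-adjacent pairs again take two possible values, the larger being the valency.

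First I would settle the vertex count and regularity. Since $\mathcal{O}^{(2\nu+\delta)}_2$ is $2^{2\nu+\delta-2}$-regular, its first subconstituent $\mathcal{O}^{(2\nu+\delta)}_2(1)$ has $2^{2\nu+\delta-2}$ vertices; by Lemma~\ref{lifting2}(1) each of them lifts to a fibre of $2^{(n-1)(2\nu+\delta-2)}$ vertices of $\mathcal{V}_1$, so $\mathcal{O}^{(2\nu+\delta)}_{2^n}(1)$ has $2^{(n-1)(2\nu+\delta-2)}\cdot 2^{2\nu+\delta-2}=2^{n(2\nu+\delta-2)}$ vertices, which is $2^{n(2\nu-1)}$ when $\delta=1$. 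By Lemma~\ref{lifting2}(2) the neighbours of a vertex $[\vec a]$ inside $\mathcal{V}_1$ all lie over neighbours of $[\pi(\vec a)]$ in $\mathcal{O}^{(2\nu+\delta)}_2(1)$, and by Lemma~\ref{lifting2}(3) every vertex of $\mathcal{V}_1$ lying over such a base neighbour is in fact adjacent to $[\vec a]$; hence $\mathcal{O}^{(2\nu+\delta)}_{2^n}(1)$ is regular of degree $2^{(n-1)(2\nu+\delta-2)}$ times the degree of $\mathcal{O}^{(2\nu+\delta)}_2(1)$.

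Next I would compute the numbers of common neighbours, splitting into the two fibre cases exactly as in the last two paragraphs of the proof of Theorem~\ref{vertices}. For vertices $[\vec a+\vec m_1]$ and $[\vec b+\vec m_2]$ of $\mathcal{O}^{(2\nu+\delta)}_{2^n}(1)$: if $\pi(\vec a)\neq\pi(\vec b)$, then $[\pi(\vec a)]$ and $[\pi(\vec b)]$ are distinct vertices of $\mathcal{O}^{(2\nu+\delta)}_2(1)$, adjacent or not according to Lemma~\ref{lifting2}(2), and Lemma~\ref{lifting2}(3) shows that their number of common neighbours in $\mathcal{O}^{(2\nu+\delta)}_{2^n}(1)$ equals $2^{(n-1)(2\nu+\delta-2)}$ times the corresponding count in the base subconstituent; if $\pi(\vec a)=\pi(\vec b)$, then $[\vec a+\vec m_1]$ and $[\vec b+\vec m_2]$ are distinct lifts of a single base vertex, so by Lemma~\ref{lifting2}(3) every lift in $\mathcal{V}_1$ of every base neighbour of $[\pi(\vec a)]$ is a common neighbour and their number equals the full degree. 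One also checks that this last kind of pair is genuinely non-adjacent: a base vertex is never adjacent to itself, since the bilinear form $\overline{G}_{2\nu+\delta,\Delta}+\overline{G}_{2\nu+\delta,\Delta}^t$ over $\mathbb{Z}_2$ is alternating and hence vanishes on the diagonal, whereas adjacency requires the value $1$. Substituting the base parameters of \cite{GWZ16} (for $\delta=0,2$) or of \cite{LW08} through the symplectic isomorphism (for $\delta=1$) and simplifying the exponents then yields the stated $\lambda$, $c_1=k$ and $c_2$ when $\delta=0,2$, and the two stated values for adjacent and for non-adjacent pairs when $\delta=1$. The fact that only two distinct non-adjacent values appear, rather than three, is exactly because in the base subconstituent one of the two non-adjacent values is already the valency $k'$, so after multiplying by the fibre size $2^{(n-1)(2\nu+\delta-2)}$ it coincides with the value $k$ coming from the same-fibre pairs.

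The main difficulty is organisational rather than conceptual: it lies in correctly transcribing and normalising the base-case parameters — especially pushing the first-subconstituent data for the symplectic graph in \cite{LW08} through the isomorphism $\mathcal{O}^{(2\nu+1)}_2\cong\Sp^{(2\nu+1)}_2$ for the $\delta=1$ case — and then carrying the arithmetic $2^{(n-1)(2\nu+\delta-2)}\cdot(\text{base value})$ through cleanly, so that $c_1$ collapses onto $k$ and precisely two distinct non-adjacent values survive in the $\delta=0,2$ case, and the claimed pairs of values survive in the $\delta=1$ case.
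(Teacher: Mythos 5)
Your proposal follows essentially the same route as the paper's own (very brief) proof: quote the base-case parameters of $\mathcal{O}^{(2\nu+\delta)}_2(1)$ (Theorem 3.9 of \cite{GWZ16} for $\delta=0,2$; \cite{LW08} via the symplectic isomorphism for $\delta=1$) and transport them through Lemma \ref{lifting2} by the same fibre argument as in Theorem \ref{vertices}, with non-adjacent pairs split into same-fibre pairs (which see the full valency $k$, giving $c_1=k$) and cross-fibre pairs (which see the fibre size times the base count). The only small inaccuracy is your reading of the $\delta=0,2$ base case as having two non-adjacent counts, one equal to the base valency $k'$; over $\mathbb{Z}_2$ the base subconstituent has a single non-adjacent count, but this does not affect the argument, since the value $c_1=k$ is produced by the same-fibre pairs in any case.
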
 

\begin{thm}
If $\delta = 0$ or $2$, then the subconstituent $\mathcal{O}^{(2 \cdot 2 + \delta)}_{2^n}(2)$ is a strongly regular with parameters $(v, k, \lambda, \mu)$ where
\begin{align*}
v &= 2^{(n - 1)(2 + \delta)}(2^{2 + \delta} + (-1)^{\frac{\delta}{2}}2^{\frac{\delta}{2} + 2} + (-1)^{\frac{\delta}{2} - 1}2^{\frac{\delta}{2} + 1} - 2), \\
\mu = k &= 2^{n(2 + \delta) - 1}, \\
\lambda &= 2^{(n - 1)(2 + \delta)}( 2^{1 + \delta} - 2^\delta - (-1)^{\frac{\delta}{2}}2^{\frac{\delta}{2} + 1} + (-1)^{\frac{\delta}{2}}2^{\frac{\delta}{2}}).
\end{align*}
Moreover, for $\nu \geq 3$, the subconstituent $\mathcal{O}^{(2 \nu + \delta)}_{2^n}(2)$ is a quasi-strongly regular with parameters $(v, k, \lambda, \{c_1, c_2 \})$ where 
\begin{align*}
v &= 2^{(n - 1)(2\nu + \delta - 2)}(2^{2\nu + \delta - 2} + (-1)^{\frac{\delta}{2}}2^{\frac{2\nu + \delta}{2}} + (-1)^{\frac{\delta}{2} - 1}2^{\frac{2\nu + \delta}{2} - 1} - 2), \\
c_1 = k &= 2^{n(2\nu + \delta - 2) - 1}, \\
\lambda &= 2^{(n - 1)(2\nu + \delta - 2)}(2^{2\nu + \delta - 4} - (-1)^{\frac{\delta}{2}}2^{\frac{2\nu + \delta}{2} - 1} + (-1)^{\frac{\delta}{2}}2^{\frac{2\nu + \delta}{2} - 2} ), \\
c_2 &= 2^{n(2\nu + \delta - 2) - 2}.
\end{align*}
If $\delta = 1$, then the subconstituent $\mathcal{O}^{(2 \nu + 1)}_{2^n}(2)$ is a strongly regular with parameters 
\[
(6 \cdot 2^{(n - 1)(2\nu - 1)}, 4 \cdot 2^{(n - 1)(2\nu - 1)}, 2 \cdot 2^{(n - 1)(2\nu - 1)}, 4 \cdot 2^{(n - 1)(2\nu - 1)}).
\]
Moreover, for $\nu \geq 3$, the subconstituent $\mathcal{O}^{(2 \nu + 1)}_{2^n}(1)$ is a $2^{n(2\nu - 1) - 1}$- regular graph with $2^{(n - 1)(2\nu - 1)} \\ (2^{2\nu - 1} - 2)$ many vertices where any two adjacent vertices have  $2^{n(2\nu  - 1) -2 }$ common neighbors and any two non-adjacent vertices have $2^{n(2\nu  - 1) -2 }$ or $2^{n(2\nu - 1) - 1}$ common neighbors.
\end{thm}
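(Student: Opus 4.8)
The plan is to derive every parameter of $\mathcal{O}^{(2\nu+\delta)}_{2^n}(2)$ from the corresponding parameter of the base subconstituent $\mathcal{O}^{(2\nu+\delta)}_2(2)$ by repeated use of Lemma~\ref{lifting2}, in exact parallel with the proof of Theorem~\ref{vertices}. The base-field input is Theorem~4.3 of \cite{GWZ16} (for $\delta=0$ or $2$, where $\mathcal{O}^{(2\nu+\delta)}_2(2)$ is strongly regular when $\nu=2$ and quasi-strongly regular when $\nu\ge 3$) together with the analysis of the subconstituents of $\Sp^{(2\nu+1)}_2\cong\mathcal{O}^{(2\nu+1)}_2$ in \cite{LW08} (for $\delta=1$).

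First I would settle the cardinality and the degree. By Lemma~\ref{lifting2}(1), each vertex of $\mathcal{O}^{(2\nu+\delta)}_2(2)$ has exactly $2^{(n-1)(2\nu+\delta-2)}$ lifts lying in $\mathcal{V}_2$, so $|\mathcal{V}_2|$ equals $2^{(n-1)(2\nu+\delta-2)}$ times the number of vertices of $\mathcal{O}^{(2\nu+\delta)}_2(2)$, which yields $v$. By Lemma~\ref{lifting2}(2)--(3), a vertex of the lifted subconstituent is adjacent to exactly $2^{(n-1)(2\nu+\delta-2)}$ lifts for each neighbour of its projection, so $k=2^{(n-1)(2\nu+\delta-2)}k^{\mathrm{base}}$.

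Next come the common-neighbour counts. If $[\vec a+\vec m_1]$ and $[\vec b+\vec m_2]$ are adjacent, then by Lemma~\ref{lifting2}(2) their projections are distinct and adjacent in $\mathcal{O}^{(2\nu+\delta)}_2(2)$, and by Lemma~\ref{lifting2}(3) the common neighbours of $[\vec a+\vec m_1]$ and $[\vec b+\vec m_2]$ are precisely the lifts of the common neighbours of $[\pi(\vec a)]$ and $[\pi(\vec b)]$ in $\mathcal{O}^{(2\nu+\delta)}_2(2)$; hence this count is $2^{(n-1)(2\nu+\delta-2)}\lambda^{\mathrm{base}}$, which gives the stated $\lambda$ in every case. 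If the two vertices are non-adjacent, I split into two cases. When their projections are distinct (hence non-adjacent in the base), the same argument gives $2^{(n-1)(2\nu+\delta-2)}$ times one of the base non-adjacent values; for $\nu=2$ this is $2^{(n-1)(2+\delta)}\mu^{\mathrm{base}}=k$ since the base subconstituent satisfies $\mu^{\mathrm{base}}=k^{\mathrm{base}}$, and for $\nu\ge 3$ it is either $k$ (when the larger base value $c_1^{\mathrm{base}}=k^{\mathrm{base}}$ applies) or $2^{(n-1)(2\nu+\delta-2)}c_2^{\mathrm{base}}=2^{n(2\nu+\delta-2)-2}$. When the two vertices have the same projection, say $[\vec a+\vec m_1]$ and $[\vec a+\vec m_2]$, Lemma~\ref{lifting2}(3) shows that every neighbour of one is a neighbour of the other, so they have exactly $k$ common neighbours. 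Assembling these, the non-adjacent common-neighbour set is $\{k\}$ for $\nu=2$ (so the graph is strongly regular with $\mu=k$) and $\{k,\,2^{n(2\nu+\delta-2)-2}\}$ for $\nu\ge 3$ (so it is quasi-strongly regular with $c_1=k$ and $c_2=2^{n(2\nu+\delta-2)-2}$). The case $\delta=1$ is identical, feeding in the symplectic-graph subconstituent data of \cite{LW08} in place of \cite{GWZ16}; in particular the base subconstituent of $\Sp^{(2\cdot2+1)}_2$ is strongly regular and lifts to the stated $(6\cdot2^{(n-1)(2\nu-1)},\dots)$ parameters.

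The one point requiring care is the organisation of the non-adjacent case: one must verify that the contribution coming from equal projections ($k$) coincides with the contribution coming from distinct projections through the larger base value (namely $2^{(n-1)(2\nu+\delta-2)}k^{\mathrm{base}}=k$), so that exactly two distinct values survive when $\nu\ge3$ and exactly one when $\nu=2$. I do not anticipate any genuine obstacle beyond this and the routine bookkeeping of the exponents and of the signs $(-1)^{\delta/2}$ when transcribing the base-field parameters, a bookkeeping already carried out in Theorem~\ref{vertices}.
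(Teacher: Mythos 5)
Your proposal is correct and follows essentially the same route as the paper: the paper likewise obtains all parameters by combining Lemma \ref{lifting2} with the base cases from Theorem 4.3 of \cite{GWZ16} and \cite{LW08}, arguing exactly as in Theorem \ref{vertices} (including the split of non-adjacent pairs into equal versus distinct projections, with the equal-projection count coinciding with the lifted $c_1^{\mathrm{base}}=k^{\mathrm{base}}$ value). Your write-up in fact spells out this bookkeeping more explicitly than the paper, which only sketches it by reference.
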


\smallskip

\end{document}